\title{$C^0$ Lagrangian monodromy\footnote{Mathematics Subject Classification (MSC) codes (2020): 53E50, 57R17, 37J39.}}
\author{Noah Porcelli}
\newcommand{\Addresses}{{
  \bigskip
  \footnotesize

  \textsc{Imperial College Department of Mathematics, Huxley Building, 180 Queen's Gate, London SW7 2RH, U.K.}\par\nopagebreak
  \textit{n.porcelli@imperial.ac.uk}

}}
\date{}
\newtheorem{TT}{Theorem}[section]
\newtheorem{LL}[TT]{Lemma}
\newtheorem{CC}[TT]{Corollary}
\newtheorem{PP}[TT]{Proposition}
\newtheorem{DD}[TT]{Definition}
\newtheorem{EE}[TT]{Example}
\newtheorem{RR}[TT]{Remark}
\newtheorem{QQ}[TT]{Question}
\newtheorem{BB}[TT]{Assumption}
\newtheorem{JJ}[TT]{Conjecture}
\newcommand{\Dim}{\mathrm{Dim}}
\newcommand{\Id}{\mathrm{Id}}
\newcommand{\R}{\mathbb{R}}
\newcommand{\C}{\mathbb{C}}
\def\bC{\mathbb{C}}
\def\bR{\mathbb{R}}
\def\bZ{\mathbb{Z}}
\newcommand{\cM}{\mathcal{M}}
\newcommand{\cP}{\mathcal{P}}
\newcommand{\cU}{\mathcal{U}}
\newcommand{\eps}{\varepsilon}
\begin{document}
    \maketitle
    \pagenumbering{arabic}
    \begin{abstract}
        We prove that (under appropriate orientation assumptions), the action of a Hamiltonian homeomorphism $\phi$ on the cohomology of a relatively exact Lagrangian fixed by $\phi$ is the identity. This extends results of Hu-Lalonde-Leclercq \cite{Hu-Lalonde-Leclercq} and the author \cite{Porcelli} in the setting of Hamiltonian diffeomorphisms. We also prove a similar result regarding the action of $\phi$ on relative cohomology.
    \end{abstract}
\section{Introduction}

\subsection{Background}

Recall a \emph{symplectic manifold} $(M, \omega)$ is a manifold $M$, equipped with a 2-form $\omega \in \Omega^2(M)$ which is closed ($d\omega = 0$) and non-degenerate (meaning $\omega^{\Dim(M)/2}$ is everywhere non-zero, in particular $\Dim(M)$ must be even). Important examples include cotangent bundles $T^*Q$ and smooth complex quasiprojective varieties. Two of the main objects of study in classical symplectic geometry are Lagrangian submanifolds and Hamiltonian diffeomorphisms along with their topological and dynamical aspects.

A \emph{Lagrangian submanifold} of a symplectic manifold $L \subseteq (M, \omega)$ is a submanifold satisfying $\omega|_L = 0$, and $\Dim(L) = \Dim(M)/2$. Examples include the zero section $Q_0$ in a cotangent bundle $T^*Q$, the real loci of complex quasiprojective varieties defined by real polynomials, and the fibres of moment maps of smooth toric varieties.

A \emph{Hamiltonian diffeomorphism} $\phi^1: M \to M$ is a diffeomorphism of $M$ which is the time-one flow of a Hamiltonian vector field $\{X^t_H\}_{t \in [0,1]}$, which is defined via the equation $\omega(\cdot, X^t_H) = dH^t$ for some smooth map $H: [0,1] \times M \to \bR$ (nondegeneracy of $\omega$ implies this is well-defined). These arise naturally in classical dynamics as a formulation of Hamiltonian's equations of motion to abstract manifolds. We write $\operatorname{Ham}(M)$ for the space of Hamiltonian diffeomorphisms of $M$. We refer to \cite{MD:Intro} for more background on symplectic geometry.

In \cite{Mei-LinYau}, Yau studied the following question on the topology of Lagrangian submanifolds:
\begin{QQ}\label{QQ1}
    Let $L \subseteq (M, \omega)$ be a Lagrangian in a symplectic manifold.
    What diffeomorphisms $\tilde \phi$ of $L$ can be extended to a Hamiltonian diffeomorphism $\phi$ on $M$?
\end{QQ}
This question has been studied in various places. For example, some partial classification results in the case $L$ is a monotone Lagrangian 2-torus have been obtained in \cite{Mei-LinYau, Jack+}; in particular, even in simple cases $\tilde \phi$ can act non-trivially on $H^*(L)$. A more comprehensive history of this question can be found in the introduction in \cite{Jack+}. 

A Lagrangian $L$ is called \emph{relatively exact} if the symplectic area of any disc with bounday on $L$ is 0: in symbols, $\omega \cdot \pi_2(M, L) = 0$. Such Lagrangians enjoy much stronger rigidity properties than arbitrary Lagrangians; for example, in many cases they cannot be displaced from themselves by a Hamiltonian isotopy \cite{Floer}.

\begin{BB}
    We now fix (for the rest of the paper) a symplectic manifold $(M, \omega)$ and a Lagrangian submanifold $L \subseteq M$. 
    
    We assume everywhere that $M$ is compact or Liouville (cf. \cite[Section 7(b)]{Seidel:book})\footnote{Roughly, this says that $M$ behaves nicely outside a compact set.}. We also assume that $L \subseteq M$ is a compact, relatively exact Lagrangian. 
    
    We also assume for simplicity that all Hamiltonians are compactly supported\footnote{This is without any loss of generality, as can be seen by using an appropriate cut-off function supported away from the image of $L$ under the flow of this Hamiltonian.}.
\end{BB}

In this setting, strong constraints have been obtained for $\tilde \phi$ to extend:

\begin{TT}\label{TT: old}
    Let $L \subseteq M$ be a compact, relatively exact Lagrangian in a (compact or Liouville) symplectic manifold $M$.

    Let $\tilde \phi: L \to L$ be a diffeomorphism, and assume that there is a Hamiltonian diffeomorphism $\phi: M \to M$ extending $\tilde \phi$.
    \begin{enumerate}
        \item \cite{Hu-Lalonde-Leclercq}: $\phi$ acts as the identity on $H^*(L; \bZ/2)$.
        \item \cite{Hu-Lalonde-Leclercq,Porcelli}: If $L$ is Spin, $\phi$ acts as the identity on $H^*(L; \bZ)$.
        \item \cite{Jack}: If $L$ is oriented, $\phi$ is orientation-preserving.
        \item \cite{Porcelli}: $\phi$ acts as the identity on the set of conjugacy classes of $\pi_1 L$.
    \end{enumerate}
\end{TT}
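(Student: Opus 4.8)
The plan is to reduce the whole statement to Lagrangian Floer cohomology. Since $L$ is relatively exact there is no disc bubbling on $L$, so $HF^*(L,L)$ is defined over $\bZ/2$ in general, and over $\bZ$ when $L$ is Spin; moreover the $\mathrm{PSS}$ maps provide a canonical isomorphism of unital graded-commutative rings $H^*(L)\xrightarrow{\ \sim\ }HF^*(L,L)$ (with the corresponding coefficients). Because $\phi$ is a symplectomorphism with $\phi(L)=L$, functoriality of Floer theory gives an automorphism $\phi_*$ of $HF^*(L,L)$; and since $\phi(L)=L$ \emph{as a subset of $M$} --- so that the $\mathrm{PSS}$ isomorphism for ``$\phi(L)$'' is literally the one for $L$ --- naturality of $\mathrm{PSS}$ under symplectomorphisms identifies $\phi_*$, under this isomorphism, with $(\tilde\phi^{-1})^*$ acting on $H^*(L)$. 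Hence parts (1) and (2) are equivalent to the assertion that $\phi_*=\mathrm{id}$ on $HF^*(L,L)$.

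To see this I would use that $\phi$ is \emph{Hamiltonian}: write $\phi=\phi_1$ for a Hamiltonian isotopy $(\phi_t)_{t\in[0,1]}$ of $M$ with $\phi_0=\mathrm{id}$ (which need not preserve $L$). Then $\phi(L)$ is Hamiltonian isotopic to $L$, so the image of the Floer unit under the canonical continuation isomorphism $HF^*(L,L)\to HF^*(L,\phi(L))$ --- obtained by moving the second factor along $(\phi_t)$, and independent of the chosen isotopy by relative exactness --- is a distinguished invertible element $\eta\in HF^0(L,\phi(L))^\times$; using $\phi(L)=L$ once more, $\eta$ is an invertible element of the ring $HF^0(L,L)$. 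The Floer-theoretic heart of the matter --- the assertion that a Hamiltonian diffeomorphism acts on the relatively exact Fukaya category as an autoequivalence naturally isomorphic to the identity, with natural transformation encoded by $\eta$ --- is then that $\phi_*$ is the \emph{inner} automorphism $x\mapsto \eta\,x\,\eta^{-1}$ of $HF^*(L,L)$. Since $HF^*(L,L)\cong H^*(L)$ is graded-commutative and $\eta$ has degree $0$, this inner automorphism is the identity. Combined with the previous paragraph, $(\tilde\phi^{-1})^*=\mathrm{id}$ on $H^*(L)$, giving (1) over $\bZ/2$ and (2) over $\bZ$; in (2) the Spin hypothesis is used exactly to orient the relevant strip and continuation moduli spaces coherently, so that $HF^*(L,L;\bZ)$ and the ring isomorphism with $H^*(L;\bZ)$ exist.

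Part (3) requires a genuinely separate and, I expect, more delicate argument --- this is where I anticipate the main obstacle. It is not a consequence of (1): reducing $\tilde\phi^*$ on $H^n(L;\bZ)$ modulo $2$ only records $\deg(\tilde\phi)\bmod 2=1$, which is insensitive to the sign; nor do the obvious reductions of (3) to (2) work, since orientability does not imply Spin and Spin-ness need not be inherited by finite covers. The plan would instead be to rerun the naturality argument above in a framework requiring only that $L$ be oriented: for oriented $L$ the moduli spaces of strips and of continuation solutions are orientable, and a choice of orientation of $L$ pins down their orientations up to a single overall sign; one then shows that $\phi_*$ preserves this sign, which is equivalent to $\tilde\phi$ being orientation-preserving (equivalently, to $\tilde\phi^*$ fixing the fundamental class in $H^n(L;\bZ)$). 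The difficult point is the orientation bookkeeping --- checking that this overall sign is precisely the obstruction and that it transforms correctly under $\phi_*$ --- which is the substance of Smith's proof, and I would follow that argument rather than attempt a shortcut.
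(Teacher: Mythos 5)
This theorem is quoted in the paper as background and is not proved there: the proofs live in the cited references, and the machinery the present paper actually builds (the moduli spaces $\cP_r$ of strips with moving Lagrangian boundary conditions, the evaluation maps $ev_x$, and the Gromov-compactness ``pinching'') descends from the argument of \cite{Porcelli}, not from the one you sketch. Your route for (1) and (2) is essentially that of \cite{Hu-Lalonde-Leclercq}: identify $H^*(L)$ with $HF^*(L,L)$ via PSS, show that a Hamiltonian diffeomorphism acts on $HF^*(L,L)$ by conjugation by an invertible degree-zero continuation element, and conclude by graded-commutativity. That is a legitimate and genuinely different decomposition from the evaluation-map argument; what the latter buys (and why this paper uses it) is that it never invokes the Floer differential or product, only injectivity of $ev_x^*$ on a compact moduli space, and therefore extends to generalised cohomology theories such as $K$-theory and, as here, to the $C^0$ setting.

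Two substantive caveats. First, the step you label the ``Floer-theoretic heart'' --- that the continuation elements assemble into a natural isomorphism from the identity functor, so that $\phi_*$ is inner --- is the entire content of the theorem and is asserted rather than proved; it is standard in the exact setting but is precisely what must be established here. Second, and more seriously for part (2): the identification $HF^*(\phi(L),\phi(L)) = HF^*(L,L)$ ``because $\phi(L)=L$ as a subset'' is not automatic over $\bZ$, since the integral theory (the orientations of the strip, continuation and PSS moduli spaces) depends on a choice of Spin structure on $L$, and $\tilde\phi$ need not preserve that choice --- even when it acts trivially on $H^1(L;\bZ/2)$ it may translate the torsor of Spin structures. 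This is exactly the delicate point in the integral statement and part of the reason part (2) carries the double citation \cite{Hu-Lalonde-Leclercq,Porcelli}; your sketch does not address it. Part (3) you do not prove at all, though you correctly observe that it neither follows from (1) nor reduces to (2), and defer to \cite{Jack}.
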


Further results regarding the action of $\phi$ on string topology as well as some generalised cohomology theories were obtained in \cite{Porcelli}. Motivated by these results, one can conjecture:
\begin{JJ}\label{JJ: main}
    Let $L \subseteq M$ be a compact, relatively exact Lagrangian in a (compact or Liouville) symplectic manifold $M$.

    Let $\tilde \phi: L \to L$ be a diffeomorphism which extends to a Hamiltonian diffeomorphism $\phi:M \to M$. Then $\tilde \phi$ is isotopic to the identity.
\end{JJ}
\begin{RR}
    Theorem \ref{TT: old} only constrains the homotopy class of $\tilde \phi$. Using different methods (namely generating functions), upcoming work with Courte \cite{CP} finds constraints on the smooth isotopy class of $\tilde \phi$ in the setting of Theorem \ref{TT: old} not detectable by their homotopy class, in the special case $L$ is the zero-section in the cotangent bundle of a high-dimensional torus $M = T^*T^n$.
\end{RR}

\subsection{$C^0$ symplectic topology}
\begin{DD}
    A \emph{symplectic diffeomorphism} $\theta: M \to M$ is a diffeomorphism that preserves the symplectic structure, meaning $\theta^*\omega = \omega$.
\end{DD}
Any Hamiltonian diffeomorphism is a symplectomorphism \cite[Proposition 10.2]{MD:Intro}.

$C^0$ symplectic topology studies non-smooth generalisations of Hamiltonian and symplectic diffeomorphisms:

\begin{DD}

    A \emph{symplectic homeomorphism} is a homeomorphism $\theta: M \to M$ which is a $C^0$ limit of symplectic diffeomorphisms. 

    A \emph{Hamiltonian homeomorphism} is a homeomorphism $\theta: M \to M$ which is a $C^0$ limit of Hamiltonian diffeomorphisms. We write $\overline{Ham}(M)$ for the space of Hamiltonian homeomorphisms on $M$.
\end{DD}

The field of $C^0$ symplectic topology began with the following theorem.
\begin{TT}[{Eliashberg-Gromov \cite{Eliashberg, Gromov}}]
    Any symplectic homeomorphism which is also a diffeomorphism is in fact a symplectic diffeomorphism.
\end{TT}
This is surprising since the condition of being a symplectic diffeomorphism involves the derivative of $\theta$, and in general one often loses control of derivatives under $C^0$ limits. The Hamiltonian version of this theorem, namely that any Hamiltonian homoeomorphism which is a diffeomorphism is a Hamiltonian diffeomorphism, remains open.

In general, the geometry of Hamiltonian homeomorphisms is much less understood than those of Hamiltonian diffeomorphisms; as we illustrate in the following examples, $C^0$ symplectic topology both exhibits in some ways similar rigidity phenomena to smooth symplectic topology, and in other ways much more flexibility. It is this interplay which makes it such an intriguing subject.

\begin{EE}[Rigidity in $C^0$ symplectic topology]
    Seidel and Keating \cite{Seidel:knotted,Keating:Dehn} produced large classes of examples of symplectomorphisms which are smoothly isotopic to the identity, but not isotopic to the identity through (smooth) symplectomorphisms. Roughly, these are constructed as compositions of symplectic Dehn twists. Jannaud \cite{Jannaud:free} later showed that many of these are also not isotopic to the identity through symplectic homeomorphisms.
\end{EE}

\begin{EE}[Flexibility in $C^0$ symplectic topology]
    One of the main goals in (smooth) symplectic topology has been to find lower bounds on the numbers of fixed-points of Hamiltonian diffeomorphisms $f: M \to M$. Motivated by the \emph{Arnol'd conjecture}, it has been shown \cite{Fukaya-Ono,Liu-Tian,Ruan} that if $M$ is compact and $f$ is generic, there must be at least as many fixed-points as the sum of the Betti numbers of $M$; in particular, there must be at least 2.
    
    It is natural to ask whether the same inequality holds for Hamiltonian homeomorphisms. Surprisingly, this fails badly: Buhovsky-Humilière-Seyfaddini \cite{BHS} showed that whenever $\Dim(M) \geq 4$, there exists a Hamiltonian homeomorphism $\theta: M \to M$ with a single fixed point. 
\end{EE}

\subsection{Monodromy in the $C^0$ setting}

Here we extend parts (1) and (2) of Theorem \ref{TT: old} to the $C^0$ setting (it would be interesting to extend parts (3) and (4) to the $C^0$ setting too).
\begin{TT}\label{TT: main}
    Let $L \subseteq M$ be a compact, relatively exact Lagrangian in a (compact or Liouville) symplectic manifold $M$.

    Let $\tilde \phi: L \to L$ be a homeomorphism, and assume that there is a Hamiltonian homeomorphism $\phi: M \to M$ extending $\tilde \phi$.\par
    \begin{enumerate}
        \item $\phi$ acts as the identity on $H^*(L;\bZ/2)$.
        \item If $L$ is Spin, $\phi$ acts as the identity on both cohomology $H^*(L; \bZ)$ and complex $K$-theory $K^*(L)$.
    \end{enumerate}
\end{TT}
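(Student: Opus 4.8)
The plan is to deduce the theorem from a refinement of the diffeomorphism case (Theorem~\ref{TT: old}) by a $C^0$-approximation, using that singular cohomology and complex $K$-theory are homotopy invariant. Fix a Weinstein tubular neighbourhood $U \supseteq L$ with deformation retraction $r \colon U \to L$, and write $\phi = \lim_i \phi_i$ in the $C^0$ topology with each $\phi_i$ a compactly supported Hamiltonian diffeomorphism; set $L_i := \phi_i(L)$, which is relatively exact, and Spin whenever $L$ is. Since $\phi(L) = L$, the embeddings $\phi_i|_L \colon L \to M$ converge uniformly to $\tilde\phi \colon L \xrightarrow{\sim} L \hookrightarrow M$, so that for all large $i$ we have $L_i \subseteq U$, the map $h_i := r \circ \phi_i|_L \colon L \to L$ converges uniformly to $r \circ \tilde\phi = \tilde\phi$ and is therefore homotopic to the homeomorphism $\tilde\phi$, and --- $\phi_i|_L \colon L \to L_i$ being a diffeomorphism --- $r|_{L_i}\colon L_i \to L$ is a homotopy equivalence, as is the inclusion $L_i \hookrightarrow U$.

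\emph{Topological step.} The self-map $g_i := \tilde\phi^{-1} \circ h_i$ of $L$ converges uniformly to $\mathrm{id}_L$, hence for large $i$ is homotopic to $\mathrm{id}_L$ (e.g.\ along short geodesics for an auxiliary Riemannian metric on $L$). By homotopy invariance $g_i^* = \mathrm{id}$ on $H^*(L;\bZ/2)$, $H^*(L;\bZ)$ and $K^*(L)$; equivalently, for all large $i$,
\[ \tilde\phi^* \;=\; h_i^* \;=\; (\phi_i|_L)^* \circ (r|_{L_i})^* \qquad \text{on } H^*(L;\bZ/2),\ H^*(L;\bZ),\ K^*(L), \]
where $(r|_{L_i})^*$ is the isomorphism induced by the retraction. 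So it suffices to prove $h_i^* = \mathrm{id}$.

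\emph{Symplectic step.} This is the following extension of Theorem~\ref{TT: old}(1),(2): if $\psi$ is a Hamiltonian diffeomorphism of $M$ with $\psi(L) \subseteq U$, then $(r \circ \psi|_L)^*$ is the identity of $H^*(L;\bZ/2)$, and of $H^*(L;\bZ)$ and $K^*(L)$ when $L$ is Spin; applying it to $\psi = \phi_i$ finishes the argument. For $\psi(L)=L$ this is Theorem~\ref{TT: old}(1),(2) verbatim. In general it is the variant of the Hu--Lalonde--Leclercq and \cite{Porcelli} arguments in which $L$ is not assumed invariant under $\psi$, only pushed into a Weinstein neighbourhood of itself: one compares two identifications of the relatively exact Floer group $HF^*(L,\psi(L))$ (and its spectral enhancement), namely the PSS map from $H^*(L)$ (resp.\ $K^*(L)$) along a Hamiltonian isotopy taking $L$ to $\psi(L)$, and the ``opposite'' PSS map from $H^*(\psi(L)) \cong H^*(L)$. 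The ``Lagrangian monodromy'' computation of loc.\ cit.\ --- using unitality of continuation maps and the module structure over $QH^*(M)$ (resp.\ the relevant ring spectrum), which $\psi$ preserves because it is Hamiltonian --- shows that their composite is both $((\psi|_L)^*)^{-1}$ and, once $\psi(L)\subseteq U$, the geometric comparison $(r|_{\psi(L)})^*$; hence $(\psi|_L)^*\circ(r|_{\psi(L)})^* = (r\circ\psi|_L)^* = \mathrm{id}$.

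\emph{Main obstacle.} The difficulty is concentrated in identifying the ``opposite PSS'' map, and establishing its compatibility with the geometric comparison $(r|_{\psi(L)})^*$, for $\psi(L) = L_i$ \emph{only $C^0$-close} to $L$: such an $L_i$ need not be a graph over $L$, so this identification is not available from Morse theory on a generating function. One must instead invoke a $C^0$-robustness property of relatively exact Lagrangian Floer theory --- for instance the $C^0$-continuity of Lagrangian barcodes/persistence modules and of PSS maps, so that the infinite part of the barcode of $HF^*(L,L_i)$ is canonically $H^*(L)$ for large $i$, compatibly with the module structures; or an argument first straightening $L_i$ to an honest graph over $L$ by a $C^0$-small Hamiltonian isotopy. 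Carrying this through while retaining enough of the algebraic structure that the $\bZ$- and $K$-theoretic refinements survive is the heart of the proof; granting it, the approximation in the first two steps is routine.
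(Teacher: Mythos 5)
Your reduction is correct and is essentially the paper's Section 2: approximate $\phi$ by a $C^0$-close Hamiltonian diffeomorphism $\psi^1$ with $\psi^1(L)\subseteq U$, and observe (via short geodesics) that $\tilde\phi$ is homotopic to $\pi\circ\psi^1|_L$, so that it suffices to show $(\pi\circ\psi^1|_L)^*=\mathrm{id}$. But everything after that --- your ``symplectic step'' --- is a restatement of what needs to be proved rather than a proof, and you say so yourself: the identification of the ``opposite PSS'' map with the geometric comparison $(r|_{\psi(L)})^*$ when $\psi(L)$ is only $C^0$-close to $L$ (not a graph, not straightenable to one by any argument you supply) is exactly the open point, and neither of your suggested remedies ($C^0$-continuity of barcodes compatible with module structures, or a $C^0$-small straightening isotopy) is established or obviously true. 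In particular for the $K$-theoretic refinement there is no off-the-shelf barcode continuity result to invoke. So the heart of the argument is missing.

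The paper closes this gap by a different and more geometric mechanism, avoiding any identification of Floer groups or PSS maps for the pair $(L,\psi^1(L))$. It forms, for each $r$, the compact moduli space $\cP_r$ of finite-energy holomorphic maps $u:Z_r\to M$ with \emph{moving} boundary conditions $u(z)\in\psi^{\operatorname{Im}z}(L)$, and imports from Hofer and from \cite{HP} the statement that the boundary evaluation $ev_{x}^*:R^*(L)\to R^*(\cP_C)$ is injective (Proposition \ref{PP: cited}); this is the only Floer-theoretic input. Evaluating around $\partial Z_C$ gives $ev_{x_0+i}\simeq\psi^1\circ ev_{x_0}$ for free. The genuinely new $C^0$ ingredient --- the one that substitutes for your unavailable comparison --- is the pinching statement (Proposition \ref{PP: main}): for $C$ large there is a point $x_0$ at which \emph{every} $u\in\cP_C$ satisfies $\sup_t d(u(x_0),u(x_0+ti))\le\eps$. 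This is proved by Gromov compactness together with uniform exponential decay on the strip-like end of the limit domain $Z_-$ (using that $L\pitchfork L'$). It yields $\pi\circ ev_{x_0+i}\simeq ev_{x_0}$ via short geodesics, hence $\phi\circ ev_{x_0}\simeq ev_{x_0}$, and injectivity of $ev_{x_0}^*$ finishes the proof. If you want to complete your write-up, you should replace the ``symplectic step'' by this neck-stretching/pinching argument (or supply a genuine proof of the $C^0$-robust PSS comparison you invoke, which would be a substantially harder route).
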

Theorem \ref{TT: main} holds for various other generalised cohomology theories $R^*$ (such as real $K$-theory $KO^*$) provided the condition \cite[Proposition 1.14]{Porcelli} holds for $R^*$. Examples of diffeomorphisms of closed manifolds which act as the identity but not on (real or complex) $K$-theory are given in \cite[Appendices A \& B]{Porcelli}.\par
The proof of Theorem \ref{TT: main} approximates $\phi$ with a $C^0$-close Hamiltonian diffeomorphism, and proceeds by adapting the strategy of \cite{Porcelli} to this approximation.
\begin{RR}
    
    The \emph{Nearby Lagrangian conjecture} says that any closed exact Lagrangian $L$ in a cotangent bundle $T^*Q$ is Hamiltonian isotopic to the zero-section $Q_0 \subseteq T^*Q$. As we show in Section \ref{sec: NLC}, a positive resolution to this conjecture would allow us to deduce Theorem \ref{TT: main} from Theorem \ref{TT: old}.
\end{RR}
\begin{RR}
    Question \ref{QQ1} can be viewed as the study of the relative mapping class group $\pi_0 \mathrm{Ham}(M, L)$ of isotopy classes of Hamiltonian diffeomorphisms which fix $L$, and Theorem \ref{TT: main} is then the corresponding extension to Hamiltonian homeomorphisms $\pi_0 \overline{\mathrm{Ham}}(M, L)$.

    In a slightly different direction, homotopy clases of loops of Hamiltonian diffeomorphisms (equivalently, nontrivial elements of $\pi_1 \mathrm{Ham}(M)$) were constructed in \cite{Seidel:rep} and shown to be nontrivial using the \emph{Seidel representation}. This has been extended recently in \cite{HJL} to show that these loops are still nontrivial as homotopy classes of Hamiltonian homeomorphisms (equivalently, nonzero in $\pi_1 \overline{Ham}(M)$), by combining the Seidel representation with spectral invariants.
\end{RR}
\subsection{Extension to the relative setting}
    Assume $\tilde \phi: L \to L$ is a homeomorphism which extends to a Hamiltonian homeomorphism $\phi: M \to M$. $\phi$ induces a map of pairs $(M, L) \to (M, L)$, and hence a map on relative cohomology. 
    
    Conjecture \ref{JJ: main} would imply that $\phi$ is homotopic to the identity as a map of pairs; we do not prove this, but prove the cohomological analogue:
    \begin{TT}\label{TT: rel}
        Let $L \subseteq M$ be a compact, relatively exact Lagrangian in a (compact or Liouville) symplectic manifold $M$.
        
        Let $\tilde \phi: L \to L$ be a homeomorphism, and assume that there is a Hamiltonian homeomorphism $\phi: M \to M$ extending $\tilde \phi$. Then:
        \begin{enumerate}
            \item $\phi$ acts as the identity on $H^*(M, L;\bZ/2)$.
            \item If $L$ is Spin, $\phi$ acts as the identity on both $H^*(M, L; \bZ)$ and $K^*(M, L)$.
        \end{enumerate}
    \end{TT}
    \begin{RR}
        It is tempting to attempt to deduce Theorem \ref{TT: rel} the long exact sequence on cohomology associated to the pair $(M, L)$, and use Theorem \ref{TT: main} along with the fact $\phi$ must act as the identity on $H^*(M)$.
        
        However, this does not follow automatically, because the five lemma does \emph{not} constrain automorphisms.
        
        More precisely, there exists a chain complex $B_*$, sitting in an exact triangle $A_* \xrightarrow{\alpha} B_* \xrightarrow{\beta} C_*$, along with an endomorphism $\beta: B_* \to B_*$ which commutes with the identity on both $A_*$ and $C_*$, but such that $\beta$ does not act as the identity on homology.
        
        We illustrate a concrete example of this phenomenon, in the following commutative diagram:
        \begin{equation}
            \begin{tikzcd}
                0
                \ar[r]
                &
                \bZ 
                \ar[r, "{\left(\begin{smallmatrix} 1 \\ 0 \end{smallmatrix}\right)}"]
                \ar[loop below, "\Id_\bZ"]
                &
                \bZ^{\oplus 2}
                \ar[r, "{\left(\begin{smallmatrix} 0 \, 1\end{smallmatrix}\right)}"]
                \ar[loop below, "{\left(\begin{smallmatrix} 1 \, 1 \\ 0 \, 1 \end{smallmatrix}\right)}"]
                &
                \bZ
                \ar[loop below, "\Id_\bZ"]
                \ar[r]
                &
                0
            \end{tikzcd}
        \end{equation}
    \end{RR}
    The proof of Theorem \ref{TT: rel} is similar to the proof of Theorem \ref{TT: main}, but whereas the proof of Theorem \ref{TT: main} uses a moduli space of holomorphic curves with domain $Z_C$, $\cP_C$, along with the evaluation map on the boundary of $Z_C$, $ev: \cP \times \partial Z_C \to M$, the proof of Theorem \ref{TT: rel} uses the evaluation map on the whole of $Z_C$, $ev: \cP \times Z_C \to M$.
   
\subsection{Acknowledgements}
    The author thanks Amanda Hirschi, Ivan Smith and Yuhan Sun for helpful conversations, Jack Smith for helpful conversation and useful comments on an earlier draft, and the anonymous referee for useful comments and feedback. The author is supported by the Engineering and Physical Sciences Research Council [EP/W015889/1].

\section{Relationship to the Nearby Lagrangian conjecture}\label{sec: NLC}
    One of the main open problems in symplectic topology is:
    \begin{JJ}[Nearby Lagrangian Conjecture]\label{JJ: NLC}
        Any closed exact Lagrangian $L$ in a cotangent bundle $T^*Q$ is Hamiltonian isotopic to the zero-section $Q_0 \subseteq T^*Q$.
    \end{JJ}
    
    As we show in Proposition \ref{PP: NLC} below, if the Nearby Lagrangian conjecture holds, we can reduce the exact case of Theorem \ref{TT: main} to Theorem \ref{TT: old}.
    
    \begin{PP}\label{PP: NLC}
        Suppose $(M, \omega)$ is a Liouville symplectic manifold and $L$ is a closed exact Lagrangian in $M$. Let $\tilde\phi: L \to L$ be a homeomorphism that extends to a Hamiltonian homeomorphism $\phi: M \to M$. 
        
        Assume the Nearby Lagrangian Conjecture holds. Then $\tilde \phi$ is homotopic to a diffeomorphism $\tilde \psi: L \to L$ that extends to a Hamiltonian diffeomorphism $\psi: M \to M$.
    \end{PP}
    
    Using Proposition \ref{PP: NLC}, one may then apply results such as Theorem \ref{TT: main}, which only apply to Hamiltonian diffeomorphisms, to $\tilde \psi$ to deduce conclusions about the homotopy class of $\tilde \phi$.

    \begin{proof}[Proof of Proposition \ref{PP: NLC}]
        \emph{Weinstein's neighbourhood theorem} \cite[Theorem 3.33]{MD:Intro} says there is a tubular neighbourhood $U \subseteq M$ of $L$ which is symplectomorphic to the disc bundle $D^*_r L$ of some radius $r$. Let $\pi: U \to L$ be the projection map.

        By definition, we may choose a Hamiltonian diffeomorphism $\phi': M \to M$ which is $C^0$-close to $\phi$. $\phi'$ may not send $L$ to itself, but (for $\phi'$ sufficiently $C^0$-close to $\phi$), $\phi'(L)$ does lie in $U$, and $\pi \circ \phi': L \to L$ is $C^0$-close to (and hence homotopic to) $\phi$. By the Nearby Lagrangian conjecture, we may assume $L'$ is Hamiltonian isotopic to $L$ in $U$. By isotopy extension for Hamiltonians (similarly to \cite[Exercise 3.40]{MD:Intro}), there is a Hamiltonian isotopy from $L'$ to $L$ in $M$, staying entirely in the Weinstein neighbourhood $U$. Let $\theta$ be the corresponding Hamiltonian diffeomorphism of $M$, sending $L'$ to $L$. Let $\psi = \theta \circ \phi'$; this sends $L$ to $L$, and is a Hamiltonian diffeomorphism by \cite[Proposition 10.2]{MD:Intro}. We set $\tilde \psi = \psi|_L: L \to L$. It remains to show $\tilde \psi$ is homotopic to $\tilde \phi$.

        Since $\phi'$ is $C^0$-close to $\phi$, it follows that $\pi \circ \phi': L \to L$ is $C^0$-close to, and hence homotopic to, $\tilde \phi$.

        Since the Hamiltonian isotopy from $L'$ to $L$ lies in $U$, by composing with $\pi$ we obtain a homotopy from $\pi \circ \phi'$ to $\tilde \psi$. 
        
        Combining these final two homotopies, we obtain one between $\tilde \psi$ and $\tilde \phi$.
    \end{proof}
    \begin{RR}
        Let $M = T^*Q$ be a cotangent bundle. The \emph{strong nearby Lagrangian conjecture} says that the space of closed exact Lagrangians in $M$ is contractible. This would imply Conjecture \ref{JJ: main} in the case $L \subseteq M$ is the zero-section.

    \end{RR}

\section{Smooth approximation}\label{sec: approx}
    We first choose a complete Riemannian metric on $M$, and let $2\eps > 0$ be its injectivity radius. Write $d$ for the Riemannian distance function. We may assume that $L$ is a totally geodesic submanifold with respect to this metric.

    Now choose a smooth Hamiltonian diffeomorphism $\psi: M \to M$ which is sufficiently $C^0$-close to $\phi$ that for all $x$ in $M$, $d(\psi(x), \phi(x)) \leq \eps$. We write $L'$ for $\psi(L)$.\par
    Let $U$ be a tubular neighbourhood of $L$ and $\pi: U \to L$ the projection map. We choose $U$ small enough that for all $x$ in $U$,
    \begin{equation} \label{eq: pi d}
        d(x, \pi(x)) \leq \eps
    \end{equation}
    We may choose $\psi$ $C^0$-close enough to $\phi$ such that $L'$ lives in $U$, and generically so that $L$ and $L'$ intersect transversally.
    \begin{LL}\label{LL: hom}
        The maps $\phi, \pi \circ \psi: L \to L$ are homotopic.
    \end{LL}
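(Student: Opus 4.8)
The plan is to produce the homotopy explicitly, exploiting the $C^0$-closeness of $\psi^1$ to $\phi$: pointwise, the minimising geodesic from $\phi(x)$ to $\psi^1(x)$ gives a homotopy between $\phi|_L$ and $\psi^1|_L$ that stays in a small neighbourhood of $L$, and I then push this homotopy onto $L$ via $\pi$.

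The first step is to observe that, since $\phi(L)=L$ and the tubular-neighbourhood projection $\pi\colon U\to L$ restricts to the identity on $L$, one has $\phi|_L=\pi\circ\phi|_L$ as self-maps of $L$. It therefore suffices to construct a homotopy between $\phi|_L$ and $\psi^1|_L$ \emph{as maps $L\to U$}; composing such a homotopy with $\pi$ connects $\pi\circ\phi|_L=\phi|_L$ to $\pi\circ\psi^1|_L$, which is exactly the claim.

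For the second step, recall that $U$ is fixed first (subject only to \eqref{eq: pi d}) and $\psi^1$ is then chosen $C^0$-close to $\phi$; since $U$ is open and $L$ compact, pick $\rho>0$ with $\{x\in M: d(x,L)<\rho\}\subseteq U$ (automatically $\rho\le\eps$), and strengthen the choice of $\psi^1$ so that $d(\psi^1(x),\phi(x))<\rho$ for all $x$ — this is in the same spirit as the closeness already imposed, and in particular already implies $L'\subseteq U$. Then for $x\in L$ the points $\phi(x)$ and $\psi^1(x)$ are at distance $<\rho<3\eps$, hence joined by a unique minimising geodesic, so
\[
H(x,t):=\exp_{\phi(x)}\!\bigl(t\cdot\exp_{\phi(x)}^{-1}(\psi^1(x))\bigr),\qquad (x,t)\in L\times[0,1],
\]
is well defined and continuous, with $H(\cdot,0)=\phi|_L$ and $H(\cdot,1)=\psi^1|_L$. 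Each value $H(x,t)$ lies within $d(\phi(x),\psi^1(x))<\rho$ of $\phi(x)\in L$, hence in $U$, so $\pi\circ H\colon L\times[0,1]\to L$ is the required homotopy.

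I do not expect a genuine obstacle here; the one point to handle with care is the order of the choices — the neighbourhood $U$, and hence $\rho$, must be fixed before the degree of $C^0$-closeness of $\psi^1$ is chosen, since the bare bound $d(\psi^1,\phi)\le\eps$ alone need not keep the geodesic homotopy inside the (possibly much thinner) domain $U$ of $\pi$. It is worth noting that this argument uses neither that $\psi^1$ is Hamiltonian nor that $\phi$ is homotopic to the identity on $M$, only the $C^0$-smallness and the fact that $\phi$ preserves $L$.
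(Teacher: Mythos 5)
Your argument is correct and is essentially the paper's: both proofs interpolate along the unique short geodesic supplied by the $C^0$-closeness of $\psi^1$ to $\phi$ and the injectivity radius bound. The difference is where you put the projection. The paper takes the geodesic directly from $\phi(x)$ to $\pi\circ\psi^1(x)$ (both points of $L$, at distance at most $2\eps$) and calls this ``the required homotopy''; as written that is a homotopy of maps $L\to M$, and one still has to push it into $L$, which is precisely the point you isolate. Your fix --- fix $U$ first, choose $\rho>0$ with the $\rho$-neighbourhood of $L$ contained in $U$, and only then demand $d(\psi^1,\phi)<\rho$ --- is the right one, and is harmless since the paper already re-chooses $\psi^1$ after fixing $U$ (to arrange $L'\subseteq U$ and transversality); the bare bound $d(\psi^1,\phi)\le\eps$ would not do, because $\eps$ is determined by the injectivity radius of $M$ and has nothing to do with how thin $U$ is. You are also right that the homotopy must genuinely run through maps into $L$: in the application (Corollary \ref{CC: main} and the proof of Theorem \ref{TT: main}) one needs $ev_{x_0}^*\circ\phi^*=ev_{x_0}^*$ on all of $R^*(L)$, and a homotopy of maps into $M$ would only control $\phi^*$ on the image of $R^*(M)\to R^*(L)$. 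So your write-up is, if anything, more careful than the paper's on this point; no gap.
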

    \begin{proof}
        For $x$ in $L$, since $\pi\circ \phi(x) = \phi(x)$, by the triangle inequality $d(\phi(x), \pi \circ \psi(x)) \leq 2\eps$. Thus there is a unique small geodesic between $\phi(x)$ and $\pi \circ \psi(x)$; since $L \subseteq M$ is totally geodesic, this geodesic stays in $L$. Moving along this geodesic for each $x$ gives the required homotopy.
    \end{proof}
\section{Moduli spaces}
    In this section, we construct moduli spaces $\cM$ and $\cP_r$ for $r \geq 0$, similarly to \cite[Sections 4.1 \& 5.3]{Porcelli} and \cite{Hofer}.\par
    Let $D$ be a smooth convex subdomain of $\C$, lying inside $\R + i[0,1]$ and containing $[-1, 1]+i[0,1]$. \par
    \begin{DD}
        We define $Z_- := D \cup \left([0, \infty) + i[0,1]\right)$. \par
        For $r \geq 0$, we define $Z_r$ to be the union
        $$Z_r := \left( D - r\right) \cup \left([-r, r]+i[0,1]\right) \cup \left(D + r\right)$$
        See below for pictures:
    \end{DD}
    {\centering\begin{tikzpicture}[line cap=round,line join=round,>=triangle 45,x=1.0cm,y=1.0cm]        
    \draw (-1.5,+0.3) node[anchor=north west] {\parbox{0 cm}{$$ Z_-\mathchar\numexpr"6000+`:\relax  $$}};
    \draw (5,+0.3) node[anchor=north west] {\parbox{0 cm}{$$ \cdots  $$}};
    \draw (0, 0)--(5,0);
    \draw (0, -1)--(5,-1);
    \draw [shift={(0,-0.5)}] plot[domain=1.57:4.71,variable=\t]({1*0.5*cos(\t r)+0*0.5*sin(\t r)},{0*0.5*cos(\t r)+1*0.5*sin(\t r)});
    \fill [color=black] (0,-0) circle (1.5pt);
    \fill [color=black] (0,-1) circle (1.5pt);
    \draw (0,-0.5) node[anchor=north west] {\parbox{0 cm}{$$ 0 $$}};
    \draw (0,+0.5) node[anchor=north west] {\parbox{0 cm}{$$ i $$}};
    
    \draw (5,-3)-- (0,-3);
    \draw (0,-2)-- (5,-2);
    \draw (2.5,-2.5) node[anchor=north west] {\parbox{0 cm}{$$ 0 $$}};
    \draw (2.5,-1.5) node[anchor=north west] {\parbox{0 cm}{$$ i $$}};
    \draw (5,-2.5) node[anchor=north west] {\parbox{0 cm}{$$ r $$}};
    \draw (0,-2.5) node[anchor=north west] {\parbox{0 cm}{$$ -r $$}};
    \draw (-1.5,-1.7) node[anchor=north west] {\parbox{0 cm}{$$ Z_r\mathchar\numexpr"6000+`:\relax  $$}};
    \draw [shift={(0,-2.5)}] plot[domain=1.57:4.71,variable=\t]({1*0.5*cos(\t r)+0*0.5*sin(\t r)},{0*0.5*cos(\t r)+1*0.5*sin(\t r)});
    \draw [shift={(5,-2.5)}] plot[domain=-1.57:1.57,variable=\t]({1*0.5*cos(\t r)+0*0.5*sin(\t r)},{0*0.5*cos(\t r)+1*0.5*sin(\t r)});
    \begin{scriptsize}
        \fill [color=black] (0,-2) circle (1.5pt);
        \fill [color=black] (5,-2) circle (1.5pt);
        \fill [color=black] (0,-3) circle (1.5pt);
        \fill [color=black] (5,-3) circle (1.5pt);
        \fill [color=black] (2.5,-2) circle (1.5pt);
        \fill [color=black] (2.5,-3) circle (1.5pt);
    \end{scriptsize}
\end{tikzpicture}\par }
    $Z_-$ and $Z_r$ are both smooth convex subdomains of $\C$.

    \begin{DD}
        A \emph{compatible almost complex structure} on $(M, \omega)$ is a vector bundle automorphism $J$ of the tangent bundle $TM \to M$, such that $J^2 = -\Id_{TM}$, and that the tensor $\omega(\cdot, J \cdot)$ defines a $J$-invariant Riemannian metric on $M$.
    \end{DD}
    It is standard that compatible almost complex structures always exist, and so we may choose such a $J$ on $M$, which is additionally \emph{convex at infinity} (a ``niceness at infinity''-type condition, see \cite[Section 7(b)]{Seidel:book}) if $M$ is Liouville. We fix this $J$ throughout the rest of the paper.
    
    Recall a smooth map $u: \Sigma \to M$ from a Riemann surface $(\Sigma, j)$ with boundary is \emph{$J$-holomorphic} if $du \circ j = J \circ du$. The \emph{(topological) energy} of such a $J$-holomorphic $u: \Sigma \to M$ is the quantity
    $$E(u) := \int_{\Sigma} u^* \omega$$

    We take $\psi$ to be as defined in Section \ref{sec: approx}. Since $\psi$ is a Hamiltonian diffeomorphism, we may choose a Hamiltonian isotopy $\{\psi^t\}_{t \in [0,1]}$, with $\psi^0 = \Id_M$ and $\psi^1 = \psi$. We keep this fixed throughout the rest of the paper.
    \begin{DD}
        For $r \geq 0$, we define $\cP_r$ to be the space of $J$-holomorphic maps $u: Z_r \to M$ with finite energy, with moving Lagrangian boundary conditions:
        \begin{equation}\label{eq: b cond}
            u(z) \in \psi^{\operatorname{Im}(z)}(L) \, \textrm{ for all }\, z \in \partial Z_r
        \end{equation}
       
        We define $\cM$ to be the space of $J$-holomorphic maps $u: Z_- \to M$ which have finite energy, satisfying (\ref{eq: b cond}) (with $Z_r$ replaced by $Z_-$). We equip $\cM$ with the weak $C^\infty$-Whitney topology.
    \end{DD}
    In particular, if $x \in \R$, each of these moduli spaces sends $x$ to $L$ and $x+i$ to $L'$ (assuming $x, x+i$ lie in the relevant domain).
    
    Note that we do not quotient by any automorphisms in the definitions of these moduli spaces; this does not interfere with compactness, by the argument in \cite[Lemma 4.5]{Porcelli}.
    
    \cite[Lemma 4.3]{Porcelli} shows there is an a priori uniform upper energy bound on the energy of any $u \in \cM$ or $u \in \cP_r$, independent of $r$ (dependent however on the choice of $\{\psi^t\}_t$).
    
    \begin{PP}\label{PP: grom comp}
        Each $\cP_r$, as well as $\cM$,
        is compact. \par
        Furthermore for any sequence $u_n \in \cP_{r_n}$ where $r_n \to \infty$, after restricting to some subsequence, there is some $u \in \cM$ such that for any compact subset $K \subseteq Z_-$,
        $$u_n(\cdot - C_n)|_K \to u|_K$$

        converges uniformly.
    \end{PP}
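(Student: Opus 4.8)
\emph{Overview.} I would run the standard Gromov-compactness argument; the only non-formal input is that relative exactness rules out all bubbling. First I would fix the uniform energy bound $B$ supplied by \cite[Lemma 4.3]{Porcelli}, valid for every $u\in\cM$ and every $u\in\cP_r$, $r\ge 0$. Since the $\psi^t$ are generated by compactly supported Hamiltonians, $\bigcup_{t\in[0,1]}\psi^t(L)$ is compact; when $M$ is compact there is nothing further to check, and when $M$ is Liouville I would take $J$ convex at infinity and apply the (integrated) maximum principle, so that every finite-energy $J$-holomorphic map with boundary on this family of Lagrangians has image in a fixed compact set $K_0\subseteq M$.

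\emph{Compactness of $\cP_r$ and $\cM$.} Given a sequence $u_n$ in $\cP_r$, resp.\ in $\cM$, I would use the energy bound, the resulting $C^0$ bound, and Gromov compactness \cite[Theorem 4.6.1]{McDuff-Salamon} (in its version for maps from a bordered Riemann surface with smoothly moving Lagrangian boundary conditions) to pass to a subsequence converging in $C^\infty_{\mathrm{loc}}$, away from a finite bubbling set, to a principal holomorphic map together with a (possibly empty) collection of holomorphic spheres and of holomorphic discs with boundary on some $\psi^t(L)$, onto which the lost energy concentrates. Here relative exactness enters: it forces $\omega$ to vanish on $\pi_2(M)$ and on $\pi_2(M,L)$, and hence, since each $\psi^t(L)$ is Hamiltonian isotopic to $L$, also on $\pi_2\big(M,\psi^t(L)\big)$, so every such bubble has zero energy and is constant. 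The bubbling set is then empty and the subsequence converges in $C^\infty_{\mathrm{loc}}$ on the whole domain. Its limit $u$ is $J$-holomorphic, satisfies (\ref{eq: b cond}) since this is a closed condition, and has $E(u)\le B$ because $\int_K u^*\omega=\lim_n\int_K u_n^*\omega\le B$ for every compact $K$; hence $u$ lies in $\cP_r$, resp.\ in $\cM$. For $\cP_r$ the domain is compact, so this is $C^\infty$ convergence; for $\cM$, energy escaping into the strip-like end is harmless, the limit simply being a possibly lower-energy element of $\cM$.

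\emph{The rescaling limit.} For the last assertion I would set $C_n:=r_n$, so that the translated domains $Z_{r_n}+C_n=D\cup\big([0,2r_n]+i[0,1]\big)\cup(D+2r_n)$ exhaust $Z_-$: any compact $K\subseteq Z_-$ lies in $Z_{r_n}+C_n$ once $r_n$ exceeds the real width of $K$, so $u_n(\cdot-C_n)|_K$ is eventually defined, with $\int_K\big(u_n(\cdot-C_n)\big)^*\omega\le E(u_n)\le B$ and image in $K_0$. Then, choosing an exhaustion $K_1\subseteq K_2\subseteq\cdots$ of $Z_-$ by compact sets, running the no-bubbling argument above on each $K_j$, and diagonalising, I would extract a subsequence and a $J$-holomorphic $u:Z_-\to M$ satisfying (\ref{eq: b cond}) with $E(u)\le B$, hence $u\in\cM$, for which $u_n(\cdot-C_n)|_K\to u|_K$ uniformly for every compact $K\subseteq Z_-$.

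\emph{Main obstacle.} The only point requiring care is the no-bubbling step — in particular, observing that relative exactness descends to the Hamiltonian-isotopic boundary conditions $\psi^t(L)$ — together with, when $M$ is Liouville, the standard confinement of the curves to a compact set via a maximum principle. The non-compactness of $Z_-$ and the possible loss of energy in the limit are not genuine obstacles here, precisely because only locally uniform convergence is asserted.
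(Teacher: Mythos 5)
Your proposal is correct and follows exactly the route the paper intends: the paper gives no detailed proof, merely invoking the a priori energy bound of \cite[Lemma 4.3]{Porcelli}, relative exactness to exclude bubbling, and Gromov compactness \cite[Theorem 4.6.1]{McDuff-Salamon}, which is precisely what you flesh out (including the correct choice $C_n=r_n$ and the diagonal argument for the translated sequence). The only ingredient you do not mention is the paper's remark that not quotienting by domain automorphisms is harmless (\cite[Lemma 4.5]{Porcelli}), but since you work with convergence of maps on the fixed domains rather than of equivalence classes, this does not affect your argument.
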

    \begin{proof}
        Compactness of each $\cP_r$ is proved in \cite[Lemma 4.5]{Porcelli} (where $\cP_r$ is denoted $\cU_r$); the same argument applies also to $\cM$. Note that this requires both relative exactness of $L$ as well as cylindricity at infinity of $J$.

        The final claim is an application of \cite[Theorem 4.6.1]{McDuff-Salamon}. Explicitly in the notation of \textit{loc. cit.}, we set $\nu = n$, $\Sigma = Z_-$, $J^{n} = J$, $\Omega^n = (Z_{r_n} \cap \bC_{\mathrm{Re} < 0}) + n \subseteq \Sigma$ and $u^n = u_n(\cdot + r_n)$. Finally relative exactness of $L$ allows us to conclude the set $Z$ arising in the conclusion of \textit{loc. cit.} must be empty (since no bubbling can occur).
    \end{proof}
   
    Though we could generically choose $J$ so that these moduli spaces are transversally cut out, this would be unnecessary for our purposes.
    \begin{PP}\label{PP: cited}
        Let $C \geq 0$ and $x \in [-C, C]$, and let $ev_x: \cP_C \to L$ send $u$ to $u(x)$. Then under the orientation conditions of Theorem \ref{TT: main} for $R^* = H^*(\cdot; \bZ/2), H^*(\cdot; \bZ)$ or $K^*$, the induced map $ev^*_{x}: R^*(L) \to R^*(\cP_C)$ is injective.
    \end{PP}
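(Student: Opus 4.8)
The plan is to produce a one-sided inverse to $ev_x^*$; injectivity then follows formally. This is in essence the statement established in \cite{Porcelli}, and since, after the smooth approximation of Section 2, $\cP_C$ is an honest moduli space of $J$-holomorphic maps with smooth moving Lagrangian boundary conditions, the argument there applies here without change; I will sketch it, working directly with $\cP_C$. First observe that for any $x, x' \in [-C, C]$ the evaluation maps $ev_x, ev_{x'} : \cP_C \to L$ are homotopic — slide the marked point along $[-C,C] \times \{0\} \subseteq \partial Z_C$, using that $\psi^0 = \mathrm{id}$ so the image stays in $L$ — so it suffices to treat $x = 0$. By Proposition \ref{PP: grom comp}, $\cP_C$ is compact; relative exactness of $L$ rules out disc and sphere bubbling, so in a transversally cut out model $\cP_C$ would be a compact manifold with corners of dimension $\dim L$ and $ev_0 : \cP_C \to L$ a proper map between spaces of the same dimension. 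The key claim is that $(\cP_C, ev_0)$ is \emph{degree one}: it represents the class of $(L, \mathrm{id}_L)$ in the appropriate bordism group of $L$ — unoriented bordism when $R^* = H^*(\cdot; \bZ/2)$, and its oriented, respectively $K$-theoretic, refinement when $L$ is Spin.

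To prove this one deforms the boundary data to the trivial one. The isotopy $\{\psi^t\}_{t \in [0,1]}$ is a path in the group of Hamiltonian diffeomorphisms of $M$ from $\mathrm{id}$ to $\psi^1$; homotope it, keeping the basepoint $\psi^0 = \mathrm{id}$ fixed, to the constant path. Along this homotopy the spaces $\cP_C$ (with $C$ fixed) fit together into a family over the interval of deformation which, by relative exactness, the uniform energy bound of \cite[Lemma 4.3]{Porcelli} (now applied to the resulting compact family of isotopies) and Gromov compactness, forms a compact bordism over $L$ via $ev_0$. At the constant endpoint $Z_C$ is compact, so exactness forces every element of $\cP_C$ to be constant, whence $\cP_C = L$ and $ev_0 = \mathrm{id}_L$; the linearised operator is then surjective, so this endpoint is automatically transverse. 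Therefore $[\cP_C, ev_0] = [L, \mathrm{id}_L]$ in the relevant bordism group.

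A degree-one map $f : X \to L$ in the above sense carries a stable Umkehr (wrong-way) map $f_! : R^*(X) \to R^*(L)$, and the projection formula gives $f_! f^* \alpha = \alpha \cdot f_!(1) = \alpha$, since $f_!(1)$ is a bordism invariant equal to $\mathrm{id}_!(1) = 1$; hence $f^*$ is split injective, which is the proposition. For $R^* = H^*(\cdot; \bZ/2)$ this needs no orientations; for $H^*(\cdot; \bZ)$ and $K^*$ one needs the relative orientation datum of $ev_0$ — combining $\det TL$ with the determinant line of the family of linearised $\bar\partial$-operators over $\cP_C$ — to be $R$-orientable, and the orientability of that determinant line is controlled by the Spin condition, which is exactly the criterion isolated in \cite[Proposition 1.14]{Porcelli} and checked there for the relevant $R$. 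The main obstacle throughout is carrying this out without assuming the moduli spaces are transversally cut out: ``degree one'', ``bordism'' and ``Umkehr map'' must be interpreted through a virtual or framed fundamental class for the compact spaces involved (or by perturbing only near the boundary strata, as in \cite{Porcelli}), and one must check that the homotopy of isotopies above keeps energies uniformly bounded and introduces no bubbling — again a consequence of relative exactness. The only genuinely new point relative to \cite{Porcelli} is the trivial observation that this entire construction depends on the Hamiltonian homeomorphism $\phi$ only through the smooth diffeomorphism $\psi^1$ and its chosen isotopy to the identity, so the $C^0$ setting of Theorem \ref{TT: main} causes no additional difficulty.
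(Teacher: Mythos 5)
The paper does not actually prove this proposition: it cites \cite[Theorem 3.1]{Hofer} for the $\bZ/2$ case and \cite[Theorem 2.12]{HP} for the integral and $K$-theoretic cases. What you have written is, in effect, a reconstruction of the argument contained in those references, and your outline is faithful to it: deform the moving boundary condition $\{\psi^t\}$ through the family $\{\psi^{st}\}_{s\in[0,1]}$ to the constant one (relative exactness and the a priori energy bound keep the whole parametrised family compact, since $Z_C$ is a compact domain no transversality of $L$ and $L'$ is needed at any stage), identify the $s=0$ fibre with $L$ via constant maps (where the linearised operator is surjective), conclude that $(\cP_C, ev_0)$ is bordant to $(L,\mathrm{id}_L)$, and extract split injectivity from the projection formula for the Umkehr map. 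Two caveats. First, the step you correctly flag as ``the main obstacle'' --- making sense of the bordism class and the Umkehr map when $\cP_C$ is not cut out transversally, and choosing a cohomology model that behaves well on a compact space that need not be homotopy equivalent to a CW complex --- is not a side issue but is the entire technical content of the cited theorems (\v{C}ech-type cohomology and Alexander--Spanier-style duality in \cite{Hofer}; virtual fundamental classes via global Kuranishi charts in \cite{HP}); the paper explicitly warns about the choice of model for $R^*(\cP_C)$, and your sketch does not engage with it beyond naming it. Second, your closing remark that the construction ``depends only on $\psi^1$ and its isotopy'' is exactly the point the paper is making by reducing to this proposition, so it is consistent with the paper's logic. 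In summary: your approach is the same as that of the results the paper invokes, correct in outline, and incomplete in precisely the places where the paper itself chooses to cite rather than prove.
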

    \begin{proof}
        This is proved in \cite[Theorem 3.1]{Hofer} in the case $R^*$ is (a \v{C}ech model for) $H^*(\cdot; \bZ/2)$ and \cite[Theorem 2.12]{HP} in the other cases.
    \end{proof}
    \begin{RR}
        Since $\cP_C$ can in general be extremely singular (and in particular, not necessarily homotopy equivalent to a CW complex), we must be careful with exactly what model of $R^*(\cP_C)$ we use; see \cite[Section 2.1]{HP} for a discussion about this point. However all reasonable models for $R^*(\cdot)$ agree when applied to $L$, since $L$ admits the structure of a finite CW complex, so this does not affect how we apply Proposition \ref{PP: cited}.
    \end{RR}
\section{Pinching}
    For $r \geq 0$, $u \in \cP_r$ and $x \in [-r, r]$, we define as shorthand
    \begin{equation*}
        d_u(x) := \sup_{t \in [0,1]} d\left(u(x), u(x+ti)\right)
    \end{equation*}
    \begin{PP}\label{PP: main}
        There is some $C \geq 0$ and $x_0 \in [-C, C]$ such that for all $u$ in $\cP_C$,
        \begin{equation*}
            d_u(x_0) \leq \eps
        \end{equation*}
    \end{PP}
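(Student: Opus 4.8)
The plan is to deduce this from the geometry of the limiting space $\cM$: curves in $\cM$ become uniformly thin far out the strip-like end, and by Gromov compactness (Proposition~\ref{PP: grom comp}) a curve in $\cP_C$ for large $C$ resembles, near one of its two caps, such a curve. First I would prove the following claim $(\star)$: there is $R_0 \geq 0$ such that $d_v(x) \leq \eps/2$ for every $v \in \cM$ and every $x \geq R_0$ with $\{x\}+i[0,1] \subseteq Z_-$. For $x$ large the slice $\{x\}+i[0,1]$ lies in the part of $Z_-$ where the boundary condition \eqref{eq: b cond} reduces to the fixed pair of transverse Lagrangians $L$ and $L' = \psi^1(L)$. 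Combining the uniform energy bound for elements of $\cM$ (\cite[Lemma~4.3]{Porcelli}) with the standard exponential-decay estimate for finite-energy $J$-holomorphic half-strips with boundary on a transverse pair of Lagrangians (see e.g.\ \cite{McDuff-Salamon}), the diameter of $v(\{x\}+i[0,1])$ tends to $0$ as $x \to \infty$, uniformly over the compact family $\cM$ (the uniform rate coming from transversality of $L \cap L'$ and the uniform energy bound); picking $R_0$ so that this diameter is $\leq \eps/2$ for all $x \geq R_0$ gives $(\star)$. This is in essence the pinching estimate of \cite{Porcelli, Hofer}.

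Given $(\star)$, I would argue by contradiction. If the Proposition fails, then for every $C \geq 0$ and every $x_0 \in [-C,C]$ there is some $u \in \cP_C$ with $d_u(x_0) > \eps$. Pick an increasing sequence $C_k \to \infty$ with $C_1 \geq R_0/2$, and apply this with $x_0 = -C_k + R_0 \in [-C_k, C_k]$: we obtain $u_k \in \cP_{C_k}$ with $d_{u_k}(-C_k + R_0) > \eps$. Since $C_k \to \infty$, Proposition~\ref{PP: grom comp} applies to $(u_k)$: after translating by $C_k$ (moving the left cap to a fixed position) and passing to a subsequence, the translated maps $v_k := u_k(\,\cdot\, - C_k)$, whose domains exhaust $Z_-$, converge uniformly on compact subsets of $Z_-$ to some $v \in \cM$. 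Under this translation the slice of $u_k$ at real part $-C_k + R_0$ becomes the slice $\{R_0\}+i[0,1]$, a compact subset of $Z_-$, and $d_{v_k}(R_0) = d_{u_k}(-C_k + R_0) > \eps$; so uniform convergence on that slice gives $d_v(R_0) = \lim_k d_{v_k}(R_0) \geq \eps$, contradicting $(\star)$. Hence the Proposition holds, with $C := C_k$ for any sufficiently large $k$ and $x_0 := -C + R_0$.

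The main obstacle is $(\star)$, specifically the word \emph{uniformly}: that the decay of $v(\{x\}+i[0,1])$ out the strip-like end can be controlled independently of $v \in \cM$. This is exactly where the uniform energy bound of \cite[Lemma~4.3]{Porcelli} and the compactness of $\cM$ enter, and it is the analogue of the corresponding estimate in \cite{Porcelli, Hofer}; everything after $(\star)$ is a soft Gromov-compactness argument. (If the normalisation of Proposition~\ref{PP: grom comp} differs from translating literally by $C_k$, the translated slice sits at a coordinate differing from $R_0$ by a fixed bounded constant, which is harmless provided $R_0$ is chosen large enough at the outset.)
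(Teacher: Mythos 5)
Your proof is correct and takes essentially the same route as the paper: your claim $(\star)$ is the paper's Lemma~\ref{LL: 1} (with your $R_0$ playing the role of $\lambda$, established the same way via exponential decay on the strip-like end plus compactness of $\cM$), and the ensuing contradiction argument via Proposition~\ref{PP: grom comp} is the paper's, with $x_0 = -C + R_0$ matching the paper's choice $x_0 = \lambda - C$.
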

    This should be viewed as a version of Gromov compactness; indeed we will deduce Proposition \ref{PP: main} from Proposition \ref{PP: grom comp}. First we need a lemma:
    \begin{LL}\label{LL: 1}
        There is some $\lambda > 0$ such that for all $v$ in $\cM$,
        \begin{equation*}
            d_v(\lambda) \leq \frac \eps 2
        \end{equation*}
    \end{LL}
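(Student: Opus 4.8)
The plan is to use the uniform exponential decay of finite-energy $J$-holomorphic strips with transverse Lagrangian boundary conditions. The key point is that on the half-strip part $[0,\infty)+i[0,1]\subseteq Z_-$, the moving boundary condition (\ref{eq: b cond}) degenerates to the \emph{fixed} condition $v(x)\in L$, $v(x+i)\in L'$ (since $\psi^0=\Id$ and $\psi^1(L)=L'$), and $L\pitchfork L'$ by our generic choice of $\psi^1$. Thus every $v\in\cM$ restricts to a finite-energy $J$-holomorphic strip $v|_{[0,\infty)+i[0,1]}$ with these fixed transverse boundary conditions, whose energy is at most the a priori bound $E_0$ from \cite[Lemma 4.3]{Porcelli}; the decay estimate for such strips then produces the desired uniform $\lambda$.

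Concretely, I would first fix the half-strip $S:=[0,\infty)+i[0,1]\subseteq Z_-$ and note that $\partial Z_-\cap S$ is the union of the two rays $[0,\infty)$ and $[0,\infty)+i$, carrying the conditions ``$\in L$'' and ``$\in L'$'' respectively. Then I would invoke the standard asymptotic analysis of finite-energy strips (see e.g. \cite{McDuff-Salamon}): because the asymptotic operator of the transverse pair $(L,L')$ has a spectral gap, there are constants $C,\delta>0$ depending only on $(M,\omega,J,L,L')$ — crucially not on the strip — and an $x_1\geq 0$ depending only on $C,\delta,E_0$, such that any finite-energy $J$-holomorphic strip $u\colon S\to M$ with these boundary conditions and $E(u)\leq E_0$ satisfies
\begin{equation*}
    \sup_{t\in[0,1]} d\big(u(x),u(x+ti)\big)\ \leq\ C\,E_0^{1/2}\,e^{-\delta x}\qquad\text{for all }x\geq x_1.
\end{equation*}
(The mechanism: a mean-value estimate bounds the left-hand side by $E(u|_{[x-1,x+1]+i[0,1]})^{1/2}$, and the tail energy $E(u|_{[x,\infty)+i[0,1]})$ decays exponentially once it drops below a fixed small-energy threshold, which — by $E(u)\leq E_0$ — happens for $x$ past a uniformly bounded point.) Applying this to $u=v|_S$ for each $v\in\cM$, and choosing $\lambda\geq x_1$ with $C\,E_0^{1/2}\,e^{-\delta\lambda}\leq \eps/2$, yields $d_v(\lambda)\leq\eps/2$ for all $v\in\cM$, which is the claim.

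One can also phrase this by contradiction and compactness: if no such $\lambda$ existed there would be $v_n\in\cM$ and $x_n\to\infty$ with $d_{v_n}(x_n)>\eps/2$; translating by $x_n$ and applying Gromov compactness (with relative exactness ruling out bubbling, and the uniform energy bound of \cite[Lemma 4.3]{Porcelli}), as in Proposition \ref{PP: grom comp}, would yield a nonconstant finite-energy $J$-holomorphic strip on all of $\bR+i[0,1]$ with boundary on $L$ and $L'$ — but such a strip cannot be ``thick'' arbitrarily far out, which is exactly the exponential decay statement once more. Accordingly, the main obstacle is not conceptual but is the bookkeeping in the decay estimate: one must check that the constants $C,\delta$ and the threshold $x_1$ can be taken uniform over all of $\cM$, and this is precisely where the non-degeneracy of $L\cap L'$ (supplying the exponential rate) and the a priori energy bound (bounding where the decay regime begins) are both used.
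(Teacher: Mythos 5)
There is a genuine gap in your main argument, located exactly where you place the weight: the claim that the threshold $x_1$ past which the decay regime begins ``happens for $x$ past a uniformly bounded point'' depending only on $C,\delta$ and the a priori energy bound $E_0$. This is false for the class of maps you state it for (arbitrary finite-energy half-strips with boundary on the transverse pair $(L,L')$ and energy $\leq E_0$). Counterexample: take any nonconstant finite-energy Floer strip $w\colon \R+i[0,1]\to M$ with boundary on $L$ and $L'$ (these exist in general and are not excluded by relative exactness --- e.g.\ for a Hamiltonian pushoff of the zero section they are gradient trajectories and carry positive energy), and set $u_n(z):=w(z-n)$ restricted to $[0,\infty)+i[0,1]$. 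Each $u_n$ satisfies your hypotheses with the same energy bound, yet its tail energy past $x=n$ exceeds the small-energy threshold and $d_{u_n}(n)$ is bounded below, so no $x_1=x_1(C,\delta,E_0)$ exists. The energy bound controls \emph{how much} energy there is, not \emph{where} it sits, and it is precisely the latter that must be controlled uniformly. The correct source of uniformity --- and the one the paper uses --- is the compactness of $\cM$ itself (Proposition \ref{PP: grom comp}): for each fixed $v\in\cM$ the pointwise exponential decay estimate of Robbin--Salamon gives $d_v(\lambda)\leq\eps/2$ for $\lambda$ large, and compactness of $\cM$ (together with continuity/upper semicontinuity of the tail energy in $v$) upgrades this to a single $\lambda$ working for all $v$. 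The translates $u_n$ above are not restrictions of elements of $\cM$, which is why this does not contradict the lemma.

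Your alternative argument by contradiction is closer to a correct proof but as sketched it also does not close: translating by $x_n$ and extracting a limit produces a nonconstant finite-energy strip on $\R+i[0,1]$ with boundary on $L$ and $L'$, and this is \emph{not} absurd on its own --- such strips are exactly Floer trajectories, and ``such a strip cannot be thick arbitrarily far out'' is not a property of the limit (the limit is thick at $0$ by construction). The contradiction has to come from the fact that the $v_n$ lie in the compact space $\cM$, so that after passing to a subsequence they converge to some $v_\infty\in\cM$ without energy escaping along the strip-like end; the uniform quantum of energy $\hbar>0$ needed to form a nonconstant limit at $x_n\to\infty$ is then unavailable. If you either (a) restate your quantitative estimate with the hypothesis ``tail energy past $0$ is $\leq\hbar$'' and separately prove, using compactness of $\cM$, that there is a uniform $x_1$ with $E(v|_{[x_1,\infty)+i[0,1]})\leq\hbar$ for all $v\in\cM$, or (b) run the soft pointwise-decay-plus-compactness argument of the paper, the proof is complete.
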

    \begin{proof}
        Since we assumed $L$ and $L'$ intersect transversally, for fixed $v \in \cM$ this holds for $\lambda$ sufficiently large, by the standard exponential decay estimate for holomorphic curves on their strip-like ends, see e.g. \cite{Robbin-Salamon}; we may choose $\lambda>0$ large enough that this holds for all $v \in \cM$ since $\cM$ is compact.
    \end{proof}
    \begin{proof}[Proof of Proposition \ref{PP: main}]
        Let $\lambda$ be as in Lemma \ref{LL: 1}. We show that Proposition \ref{PP: main} holds for $C$ sufficiently large and $x_0 = \lambda - C$.\par
        Suppose not. There is then a sequence $C_n \to \infty$ and $u_n \in \cP_{C_n}$, such that for all $n$,
        \begin{equation}\label{eq: asmp d}
            d_{u_n}(\lambda - C_n) > \eps
        \end{equation}
        Applying Proposition \ref{PP: grom comp} to $K = \lambda + i[0,1] \subseteq Z_-$, we find some $u \in \cM$ such that
        \begin{equation}\label{eq: grom conv}
            u_n(\cdot-C_n)|_K \to u|_K
        \end{equation}
        converges uniformly. Therefore
        \begin{equation}\label{eq: conv d}
            d_{u_n}(\lambda-C_n) \to d_{u}(\lambda)
        \end{equation}
        The right hand side of (\ref{eq: conv d}) is $\leq \eps/2$ by Lemma \ref{LL: 1}, whereas the left hand side is $> \eps$ for all $n$, by (\ref{eq: asmp d}), giving a contradiction.
    \end{proof}
\section{Proof of Theorem \ref{TT: main}}
    Let $C$ and $x_0$ be as in Proposition \ref{PP: main} and let $ev_{x_0}, ev_{x_0+i}: \cP_C \to L$ be the maps which evaluate at $x_0$ and $x_0+i$ respectively
    \begin{LL}\label{LL: 3}
        The following diagram commutes up to homotopy:
        \begin{equation} \label{diag: LL3}
            \begin{tikzcd}
                \cP_C \arrow[d, "ev_{x_0}"] \arrow[dr, "ev_{x_0+i}"] &
                \\
                L \arrow[r, "\psi^1"] &
                L'
            \end{tikzcd}
        \end{equation}
    \end{LL}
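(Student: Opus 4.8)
The plan is to exhibit an explicit homotopy between $ev_{x_0+i}$ and $\psi^1 \circ ev_{x_0}$ as maps $\cP_C \to L'$, using the fact that for every $u \in \cP_C$ the points $u(x_0)$ and $u(x_0+i)$ are $\eps$-close by Proposition \ref{PP: main}, together with the boundary condition \eqref{eq: b cond}. Recall that by \eqref{eq: b cond}, $u(x_0) \in \psi^0(L) = L$ and $u(x_0+i) \in \psi^1(L) = L'$, so both composites indeed land in $L'$ after applying $\psi^1$ to the first; note that $ev_{x_0}$ as stated lands in $L$ because $x_0 \in [-C,C] \subseteq \R$, consistent with the diagram.

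First I would observe that for each $u \in \cP_C$, Proposition \ref{PP: main} gives $d\left(u(x_0), u(x_0+i)\right) \leq d_u(x_0) \leq \eps$, which is less than the injectivity radius $3\eps$; hence there is a unique minimizing geodesic $\gamma_u: [0,1] \to M$ from $\psi^1(u(x_0))$ to $u(x_0+i)$ — or more precisely, I would first connect $u(x_0)$ to $u(x_0+i)$ by the short geodesic in $M$ and then observe this geodesic can be pushed to lie near $L'$. The cleanest route: both $\psi^1(u(x_0)) \in L'$ and $u(x_0+i) \in L'$, and $d\left(\psi^1(u(x_0)), u(x_0+i)\right)$ is small provided $\psi^1$ is taken $C^0$-close enough to the identity near $L$ (which we may arrange, shrinking the approximation further, since $\psi^1$ is $C^0$-close to $\phi$ and $\phi$ fixes $L$ — alternatively one absorbs this into the choice of $\eps$). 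Then the unique short geodesic between them, composed with the nearest-point projection of a tubular neighbourhood of $L'$ onto $L'$, traces a path in $L'$; letting $u$ vary, continuity of $u \mapsto (u(x_0), u(x_0+i))$ (uniform convergence of holomorphic maps on compacts controls evaluation) makes this a continuous homotopy $\cP_C \times [0,1] \to L'$ from $\psi^1 \circ ev_{x_0}$ to $ev_{x_0+i}$.

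The main obstacle I anticipate is bookkeeping the various smallness constants so that the relevant points genuinely lie within the injectivity radius and within a tubular neighbourhood of $L'$ on which geodesic projection is well-defined — in particular ensuring $d\left(\psi^1(u(x_0)), u(x_0+i)\right)$ is controlled, which needs either a preliminary reduction arranging $\psi^1$ to be $C^0$-small on a neighbourhood of $L$ (not just on $M$, where it is only close to $\phi$), or a slightly more careful two-step homotopy: first slide $u(x_0+i)$ back to $u(x_0)$ along the short geodesic in $M$ (valid since $d_u(x_0) \leq \eps$), landing at a point of $L$, then apply $\psi^1$ throughout, which simultaneously converts the whole picture to one in $L'$ and identifies the endpoint of the slid path with $\psi^1(u(x_0))$. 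Formally this realizes the homotopy as $\psi^1$ applied to a homotopy inside (a neighbourhood of) $L$, followed by projecting to $L$; the continuity in $u$ is immediate and no transversality or regularity of $\cP_C$ is needed, only that $\cP_C$ is a topological space and evaluation is continuous. I would write it in this second form to avoid re-choosing $\psi^1$.
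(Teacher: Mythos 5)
There is a genuine gap here: both of your routes quietly assume that $\psi^1(u(x_0))$ and $u(x_0+i)$ are close, and they are not. Proposition \ref{PP: main} controls $d(u(x_0), u(x_0+i))$, but $\psi^1$ is $C^0$-close to $\phi$, not to the identity, and $\phi$ preserves $L$ only setwise: its restriction to $L$ is the arbitrary homeomorphism $\tilde \phi$, which is exactly the map whose homotopical (non)triviality is at issue. So you cannot ``arrange $\psi^1$ to be $C^0$-close to the identity near $L$,'' and $d(\psi^1(u(x_0)), u(x_0+i))$ is comparable to $d(\tilde\phi(u(x_0)), u(x_0))$, which has no reason to be small. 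Your two-step variant has the same problem in a different place: sliding $u(x_0+i)$ to $u(x_0)$ along the short geodesic and projecting by $\pi$ produces a homotopy from $\pi \circ ev_{x_0+i}$ to $ev_{x_0}$ (this is precisely the auxiliary diagram in the proof of Corollary \ref{CC: main}, which is where Proposition \ref{PP: main} actually gets used); post-composing with $\psi^1$ then yields $\psi^1 \circ \pi \circ ev_{x_0+i} \simeq \psi^1 \circ ev_{x_0}$, whose left-hand side is not $ev_{x_0+i}$. Indeed $\psi^1 \circ \pi|_{L'}$ is homotopic to $\psi^1 \circ \phi \circ (\psi^1)^{-1}$ on $L'$, and you would need it to be homotopic to the identity --- essentially the conclusion you are trying to reach.

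The missing idea is that Lemma \ref{LL: 3} does not use Proposition \ref{PP: main} at all; it uses only the moving Lagrangian boundary condition (\ref{eq: b cond}). Choose a path $\gamma$ in $\partial Z_C$ from $x_0$ to $x_0+i$ and set $r_t = \operatorname{Im}\gamma(t)$; since $u(\gamma(t)) \in \psi^{r_t}(L)$, the formula $H(t,u) = \psi^1 \circ (\psi^{r_t})^{-1} \circ u(\gamma(t))$ is a continuous homotopy with values in $L'$ from $\psi^1 \circ ev_{x_0}$ to $ev_{x_0+i}$ (``evaluating around the boundary of $Z_C$''). No metric smallness enters; the pinching estimate is reserved for the subsequent step comparing $\pi \circ ev_{x_0+i}$ with $ev_{x_0}$.
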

    \begin{proof}
        We write down an explicit homotopy; this can be thought of as ``evaluating around the boundary of $Z_C$ from $x_0$ to $x_0+i$''. Let $\gamma: [0,1] \to \partial Z_C$ be any path from $x_0$ to $x_0+i$, and let $r_t = \operatorname{Im} \gamma(t)$ for $t \in [0,1]$. Then we define a homotopy $H: [0,1] \times \cP_C \to L'$ to send $(t, u)$ to
        $$\psi^1 \circ \left(\psi^{r_t}\right)^{-1}\circ u \circ \gamma(t)$$
        Then $H(0, \cdot) = \psi^1 \circ ev_{x_0}$ and $H(1, \cdot) = ev_{x_0 + i}$.
    \end{proof}
    Using Proposition \ref{PP: main}, we may replace $\phi$ with $\psi^1$:
    \begin{CC}\label{CC: main}
        The following diagram commutes up to homotopy:
        \begin{equation} \label{diag: main}
            \begin{tikzcd}
                \cP_C \arrow[d, "ev_{x_0}"] \arrow[dr, "ev_{x_0}"] &
                \\
                L \arrow[r, "\phi"] &
                L
            \end{tikzcd}
        \end{equation}
    \end{CC}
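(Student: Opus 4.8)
The plan is to prove $\phi \circ ev_{x_0} \simeq ev_{x_0}$ as maps $\cP_C \to L$ by concatenating three homotopies, supplied respectively by Lemma \ref{LL: hom}, Lemma \ref{LL: 3}, and Proposition \ref{PP: main}. No orientation hypotheses are needed: this corollary is purely a statement about homotopy classes of continuous maps.

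First I would post-compose the homotopy $\phi \simeq \pi\circ\psi^1$ of Lemma \ref{LL: hom} with $ev_{x_0} : \cP_C \to L$, obtaining $\phi \circ ev_{x_0} \simeq \pi\circ(\psi^1\circ ev_{x_0})$ as maps $\cP_C \to L$. Next, the homotopy furnished by Lemma \ref{LL: 3} is a homotopy of maps $\cP_C \to L'$; since $L' \subseteq U$, post-composing it with the projection $\pi : U \to L$ gives $\pi\circ(\psi^1\circ ev_{x_0}) \simeq \pi\circ ev_{x_0+i}$ as maps $\cP_C \to L$. It therefore remains to produce a homotopy $\pi\circ ev_{x_0+i} \simeq ev_{x_0}$.

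For this last homotopy I would evaluate each $u$ along the vertical segment from $x_0$ to $x_0+i$ and then project back to $L$: define $G : [0,1]\times\cP_C \to L$ by $G(t,u) := \pi\bigl(u(x_0+ti)\bigr)$. Because $u(x_0) \in L$ and $\pi$ restricts to the identity on $L$, we have $G(0,u) = u(x_0) = ev_{x_0}(u)$ and $G(1,u) = \pi\bigl(u(x_0+i)\bigr) = (\pi\circ ev_{x_0+i})(u)$, so $G$ is the homotopy we want, provided it is well-defined. This is exactly the role of Proposition \ref{PP: main}: for every $u\in\cP_C$ and $t\in[0,1]$ one has $d\bigl(u(x_0+ti), u(x_0)\bigr) \leq d_u(x_0) \leq \eps$ with $u(x_0)\in L$, so $u(x_0+ti)$ lies in the $\eps$-neighbourhood of $L$, hence in $U$, and $\pi$ can be applied. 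Concatenating the three homotopies gives $\phi\circ ev_{x_0} \simeq ev_{x_0}$, which is the statement that \eqref{diag: main} commutes up to homotopy.

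The one step I expect to need care is the well-definedness of $G$. The segment $t\mapsto u(x_0+ti)$ runs through the \emph{interior} of $Z_C$, where $u$ satisfies no boundary condition, so a priori it is completely uncontrolled; Proposition \ref{PP: main} is precisely what confines it to a small neighbourhood of $L$. To make the containment $u(x_0+ti)\in U$ literally valid one should, back in Section 2, also require the tubular neighbourhood $U$ to contain the $\eps$-neighbourhood of $L$ (equivalently, apply Proposition \ref{PP: main} with a smaller prescribed bound in place of $\eps$ — its proof, via Lemma \ref{LL: 1} and Gromov compactness, yields any such bound at the cost of enlarging $C$). The remaining manipulations are formal.
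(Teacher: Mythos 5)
Your proposal is correct and follows the same decomposition as the paper: both arguments chain together Lemma \ref{LL: hom}, Lemma \ref{LL: 3} (post-composed with $\pi$), and a final homotopy $\pi \circ ev_{x_0+i} \simeq ev_{x_0}$ whose existence rests on Proposition \ref{PP: main}. The only divergence is in how that final homotopy is built. You project the image of the whole vertical segment, $G(t,u) = \pi\bigl(u(x_0+ti)\bigr)$, which requires $U$ to contain the $\eps$-neighbourhood of $L$ --- a condition the paper never imposes on $U$, and which you correctly flag as an adjustment needed back in Section 2. The paper instead interpolates only between the two endpoints: by Proposition \ref{PP: main} and (\ref{eq: pi d}), $d\bigl(\pi \circ ev_{x_0+i}(u),\, ev_{x_0}(u)\bigr) \leq 2\eps$, which is below the injectivity radius $3\eps$, so one slides along the unique short geodesic. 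The geodesic route needs no extra hypothesis on $U$; your route has the mild advantage that the homotopy visibly takes values in $L$ (the short geodesic in $M$ between two points of $L$ need not lie in $L$, so strictly the paper's homotopy should also be pushed back into $L$, e.g.\ by $\pi$). Either construction is sound, and the rest of your argument matches the paper's.
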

    \begin{proof}
        By Proposition \ref{PP: main} and (\ref{eq: pi d}), for all $u \in \cP_C$, $\pi \circ ev_{x_0+i}(u)$ and $ev_{x_0}(u)$ are of distance at most $2\eps$ from each other. Since $2\eps$ is less than the injectivity radius, the following diagram commutes, via a homotopy which follows the unique small geodesic (which lies in $L$, since $L \subseteq M$ is totally geodesic) between the two endpoints:
        \begin{equation*}
            \begin{tikzcd}
                \cP_C \arrow[swap,d, "ev_{x_0+i}"] \ar[dr, "ev_{x_0}"] &
                \\
                L' \arrow[r, "\pi"] &
                L
            \end{tikzcd}
        \end{equation*}
        Combined with Lemma \ref{LL: 3} and Lemma \ref{LL: hom}, this implies the result.
    \end{proof}

    \begin{proof}[Proof of Theorem \ref{TT: main}]
        Let $R^*$ be the relevant cohomology theory. Then by applying $R^*$ to (\ref{diag: main}) and using Proposition \ref{PP: cited}, we see that for any $\alpha \in R^*(L)$, $\phi^*(\alpha) = \alpha$.
    \end{proof}
\section{Proof of Theorem \ref{TT: rel}}
    Let $R$ be the cohomology theory $H^*(\cdot; \bZ/2), H^*(\cdot; \bZ)$ or $K^*$, and assume the relevant orientation condition from Theorem \ref{TT: rel} holds for $R$.
    
    We let $C$ and $x_0$ be as in Proposition \ref{PP: main}. We define a family of maps $\{\phi^t: M \to M\}_{t \in [0,1]}$ to be given by $\phi^t = \psi^{2t}$ for $0 \leq t \leq \frac12$, and to follow the unique small geodesic between $\psi^1(x)$ and $\phi(x)$ for $x \in M$ and $\frac 12 \leq t \leq 1$. Then $\phi^1 = \phi$ and $\phi^0$ is the identity.\par
    Consider the sweep-out map $S: L \times [0,1] \to M$, sending $(x, t)$ to $\phi^t(x)$. This sends $L \times \{0,1\}$ to $L$ and therefore a map of pairs $(L\times [0,1], L \times \{0,1\}) \to (M, L)$; applying $R^*$ and applying the suspension isomorphism gives a map
    $$\sigma: R^{*+1}(M,L) \to R^*(L)$$
    Let $\partial: R^*(L) \to R^{*+1}(M,L)$ be the boundary map.
    \begin{LL}\label{LL:6}
        The two maps
        $$\phi^*, (Id + \partial \circ \sigma): R^*(M, L) \to R^*(M, L)$$
        are equal.
    \end{LL}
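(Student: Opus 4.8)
The plan is to compare the two maps $\phi^*$ and $\mathrm{Id} + \partial \circ \sigma$ on $R^*(M,L)$ by realising both as the effect, on relative cohomology, of a geometric construction built from the moduli space $\cP_C$ together with the evaluation map on the whole of $Z_C$ (as announced in the introduction to Theorem \ref{TT: rel}). First I would set up the relevant maps of pairs. The boundary $\partial Z_C$ decomposes into the bottom edge (mapped by $\psi^{\operatorname{Im}(\cdot)}$-boundary conditions into $L$, via the $x \in \R$ components) and the top edge (mapped into $L'=\psi^1(L)$), with the two convex caps $D \pm C$ at the ends. Evaluating on all of $Z_C$ gives a map $ev \colon \cP_C \times Z_C \to M$; restricting to a neighbourhood of the bottom-and-top boundary components where the $x$-coordinate lies in $[-C,C]$, this sends $\cP_C \times (\text{bottom} \cup \text{top})$ into $L \cup L'$, hence after composing with the homotopy $L' \xrightarrow{\pi} L$ from Corollary \ref{CC: main}'s proof, and using Proposition \ref{PP: main} to control distances, one obtains a map of pairs
\begin{equation*}
    (\cP_C \times Z_C, \, \cP_C \times \partial_0 Z_C) \to (M, L),
\end{equation*}
where $\partial_0 Z_C$ is the part of the boundary parametrised by $x \in [-C,C]$ on the bottom and top edges.

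Next I would identify the two pieces. Because $Z_C$ is convex, it is contractible, and $\partial_0 Z_C$ deformation retracts onto the two horizontal segments $[-C,C] \times \{0\}$ and $[-C,C]\times\{1\}$; so the pair $(Z_C, \partial_0 Z_C)$ is homotopy equivalent to $(\text{pt} \times I, \text{pt} \times \partial I)$ up to the issue that the two boundary components of $Z_C$ get mapped to $L$ in two \emph{different} ways — one via $ev_{x_0}$ and one via $\psi^1$ followed by $\pi$, i.e. (after Lemma \ref{LL: 3} and Corollary \ref{CC: main}) via $\phi \circ ev_{x_0}$ up to homotopy. This is exactly the input needed to recognise, upon pulling back a relative class $\beta \in R^*(M,L)$ along $ev$ and then pushing forward (integrating) over the $Z_C$-direction relative to $\partial_0 Z_C$ — equivalently, using the suspension/connecting structure of the pair $(Z_C, \partial_0 Z_C)$ — that $ev^*\beta$ restricted to $\cP_C \times \partial_0 Z_C$ is the difference of $ev_{x_0}^* \circ (\text{incl}_L)^* \beta$ and $ev_{x_0}^* \circ \phi^* \circ (\text{incl}_L)^*\beta$, while the "interior" term produces precisely $ev_{x_0}^* \circ \sigma(\beta)$: the sweep-out $S\colon L\times I \to M$ used to define $\sigma$ is, up to reparametrisation and the homotopies already established, the composite of $ev$ with a path in $Z_C$ from bottom boundary to top boundary, transported by the $\{\psi^t\}$-family and then by the small-geodesic homotopy to $\phi$. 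Carefully chasing the long exact sequence of the pair and the naturality of the connecting map $\partial$, one gets the identity
\begin{equation*}
    ev_{x_0}^* \circ \phi^* = ev_{x_0}^* \circ (\mathrm{Id} + \partial \circ \sigma) \colon R^*(M,L) \to R^*(\cP_C)
\end{equation*}
as maps into $R^*(\cP_C)$, where I am conflating $R^*(M,L) \to R^*(\cP_C)$ with the composite through $R^*(L)$ where appropriate. Then, exactly as in the proof of Theorem \ref{TT: main}, Proposition \ref{PP: cited} says $ev_{x_0}^*$ is injective (on the image of $R^*(L)$, which is where the relative-to-absolute restriction lands), and more to the point I would use the relative version: one needs $ev_{x_0}^* \colon R^*(M,L) \to R^*(M\text{-version over }\cP_C)$ injective, which is the relative analogue of Proposition \ref{PP: cited} and should follow from the same source \cite{HP} by the evaluation-on-all-of-$Z_C$ remark. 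Cancelling $ev_{x_0}^*$ gives $\phi^* = \mathrm{Id} + \partial\circ\sigma$ on $R^*(M,L)$.

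The main obstacle I expect is bookkeeping the homotopies coherently as homotopies of \emph{maps of pairs}, not just of maps: the boundary conditions (\ref{eq: b cond}) only put $\partial Z_C$ into the moving Lagrangians $\psi^t(L)$, and the caps $D\pm C$ are mapped nowhere canonical, so one must check that the map $(\cP_C \times Z_C, \cP_C \times \partial_0 Z_C)\to (M,L)$ is well-defined on the nose (using Proposition \ref{PP: main} to keep $ev_{x_0+i}(u)$ within the injectivity radius of $ev_{x_0}(u)$ so that $\pi$ and the small-geodesic homotopies are available), and that every homotopy used — Lemma \ref{LL: hom}, Lemma \ref{LL: 3}, Corollary \ref{CC: main}, and the identification of $S$ — can be performed while keeping the relevant subspace inside $L$. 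A secondary subtlety, already flagged in the discussion after Proposition \ref{PP: cited}, is that $\cP_C$ may be badly singular, so I would phrase everything in terms of the \v{C}ech-type models of $R^*$ for which \cite{HP} proves injectivity, and only ever compare classes after restricting to $L$ (a finite CW complex) where all models agree.
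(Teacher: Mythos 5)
Your approach misidentifies where the content of this lemma lies, and the step you rely on to close the argument does not work. Lemma \ref{LL:6} is a purely soft statement about the isotopy $\{\phi^t\}$ from the identity to $\phi$: the paper proves it with no holomorphic curves at all, by passing to the mapping cylinder $\overline M = M \cup_{L\times\{0\}} L\times[0,1]$ of $L \hookrightarrow M$, observing that the map $\theta$ which is the identity on $M$ and sends $(x,t)\mapsto(\phi^t(x),t)$ on the cylinder induces $\Id + \partial\circ\sigma$ on $R^*(M,L)$ essentially by definition of $\sigma$, and then writing down an explicit homotopy of maps of pairs between $\theta$ and $\phi$. The moduli space $\cP_C$, the evaluation on all of $Z_C$, and Proposition \ref{PP: cited} enter the proof of Theorem \ref{TT: rel} only afterwards, in Lemma \ref{LL:7}, where they are used to show that $\sigma$ vanishes.

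The concrete gap in your argument is the final cancellation. If you interpret $ev_{x_0}^*$ on $R^*(M,L)$ as the composite through $R^*(L)$, then it is the zero map --- a relative class pulls back to zero along $(L,L)\to(M,L)$, which is also why both ``boundary terms'' you write down, $ev_{x_0}^*(\operatorname{incl}_L)^*\beta$ and $ev_{x_0}^*\phi^*(\operatorname{incl}_L)^*\beta$, vanish identically --- so nothing can be cancelled. If instead you use the genuinely relative map $R^*(M,L)\to R^*(\cP_C\times Z_C,\, \cP_C\times\partial_0 Z_C)$, its injectivity is not the content of Proposition \ref{PP: cited} nor of anything cited: since $(Z_C,\partial_0 Z_C)$ is homotopy equivalent to $([0,1],\{0,1\})$, the target is $R^{*-1}(\cP_C)$ up to suspension, and there is no mechanism forcing $R^*(M,L)$ to inject there; after the very homotopies you invoke to push the evaluation onto a boundary path, this map becomes closely tied to $\sigma$ itself, which the paper goes on to prove is zero. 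So the injectivity you need is both unestablished and implausible. The salvageable part of your proposal --- identifying the sweep-out $S$ with evaluation along a boundary path of $Z_C$ and homotoping that path through the interior, controlled by Proposition \ref{PP: main} --- is the right idea, but it belongs to the proof of Lemma \ref{LL:7}, not to this lemma.
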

    \begin{proof}
        Let $\overline M = M \cup_{L \times \{0\}} L \times [0,1]$ be the mapping cylinder of the inclusion $L \hookrightarrow M$. Then $\phi$ induces a map of pairs $(\overline M, L \times \{1\})$ to itself, and the action of this map on $R^*$ is the same as that of the action of $\phi$ on $(M, L)$.\par
        We define another map of pairs $\theta$ from $(\overline M, L \times \{1\})$ to itself: $\theta$ sends $x \in M$ to $x$ and $(x, t) \in L \times [0,1]$ to $(\phi^t(x),t)$. By construction, the map induced by $\theta$ on $R^*(M,L)$ is $Id + \partial \circ \sigma$, so it remains to show that $\theta$ and $\phi$ are homotopic as maps of pairs.\par
        Define $H: \overline M \times [0,1]_s \to \overline M$ to send $(x, s) \in M \times [0,1]$ to $\phi^s(x)$ and $(x, t, s) \in L \times [0,1]^2$ to
        $$\left(\phi^{\min \{s+t,1\}}(x), t\right)$$
        Then $H(\cdot, 0) = \theta$, $H(\cdot, 1) = \phi$ and $H(\cdot, t)$ sends $L \times \{1\}$ to itself for all $t$; together these imply that $H$ is the desired homotopy.
    \end{proof}
    We define a map $T: \cP_C \times [0,1] \to M$ as follows. Let $\gamma: [0,1] \to \partial Z_C$ be a path from $x_0$ to $x_0+i$ and let $r_t = \operatorname{Im}\gamma(t)$. Then we define $T(u, t)$ to be $u \circ \gamma(2t)$ for $0 \leq t \leq \frac 12$ and to follow the unique small geodesic between $ev_{x_0+i}(u)$ and $\phi \circ (\psi^1)^{-1} \circ ev_{x_0+i}(u)$ for $\frac 12 \leq t \leq 1$.\par
    We further define a map $F: \cP_C \times [0,1] \to L \times [0,1]$ by
    \begin{equation*}
        F(u,t) := \begin{cases}
            \left(\left(\psi^{r_{2t}}\right)^{-1} \circ u \circ \gamma(2t), t\right) &
            \textrm{ if } 0 \leq t \leq \frac12\\
            \left(\left(\psi^1\right)^{-1} \circ ev_{x_0+i}(u), t\right) &
            \textrm{ if } \frac12 \leq t \leq 1
        \end{cases}
    \end{equation*}
    \begin{LL}\label{LL:6.5}
        The following diagram commutes up to homotopy (relative to $\cP_C \times \{0,1\}$):
        \begin{equation*}
            \begin{tikzcd}
                \cP_C \times [0,1] \arrow[d, "F"] \arrow[dr, "T"] &
                \\
                L \times [0,1] \arrow[r, "S"] &
                M
            \end{tikzcd}
        \end{equation*}
    \end{LL}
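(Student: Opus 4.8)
The plan is to write down an explicit homotopy $\mathcal H\colon(\cP_C\times[0,1])\times[0,1]_s\to M$ between $S\circ F$ and $T$, relative to $\cP_C\times\{0,1\}$, and then check it is well defined and continuous. The key first observation is that on the sub-cylinder $\cP_C\times[\tfrac12,1]$ the maps $S\circ F$ and $T$ in fact \emph{agree on the nose}: for $t\in[\tfrac12,1]$ we have $F(u,t)=\big((\psi^1)^{-1}ev_{x_0+i}(u),\,t\big)$, so $S\circ F(u,t)=\phi^t\big((\psi^1)^{-1}ev_{x_0+i}(u)\big)$, and by the definition of $\phi^t$ on $[\tfrac12,1]$ (note $\phi^{1/2}=\psi^1$) this is the point at parameter $2t-1$ along the unique small geodesic from $\psi^1\big((\psi^1)^{-1}ev_{x_0+i}(u)\big)=ev_{x_0+i}(u)$ to $\phi\big((\psi^1)^{-1}ev_{x_0+i}(u)\big)$ --- which, with the evident parametrisation, is exactly $T(u,t)$. (This geodesic exists since $d(\psi^1(x),\phi(x))\le\eps<3\eps$ for all $x$.) So on this half I would simply take $\mathcal H$ to be constant in the homotopy variable $s$.

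On $\cP_C\times[0,\tfrac12]$ I would instead interpolate. Here $\phi^t=\psi^{2t}$, and boundary condition (\ref{eq: b cond}) gives $u(\gamma(2t))\in\psi^{r_{2t}}(L)$ (so that $F$ genuinely lands in $L\times[0,1]$), hence
\[
S\circ F(u,t)=\psi^{2t}\circ(\psi^{r_{2t}})^{-1}\circ u(\gamma(2t)),\qquad T(u,t)=u(\gamma(2t))=\psi^{r_{2t}}\circ(\psi^{r_{2t}})^{-1}\circ u(\gamma(2t)).
\]
These differ only in which time of the isotopy $\{\psi^\bullet\}$ is applied after $(\psi^{r_{2t}})^{-1}$, so the natural move is to slide that time linearly from $r_{2t}$ to $2t$:
\[
\mathcal H\big((u,t),s\big):=\psi^{(1-s)r_{2t}+2st}\circ(\psi^{r_{2t}})^{-1}\circ u(\gamma(2t)),\qquad 0\le t\le\tfrac12 .
\]
This is well defined since $r_{2t},2t\in[0,1]$ and hence so is the exponent; at $s=0$ it is $T$ and at $s=1$ it is $S\circ F$.

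It then remains to glue the two halves and to check the homotopy is stationary on $\cP_C\times\{0,1\}$. At the junction $t=\tfrac12$ one has $\gamma(1)=x_0+i$ and $r_1=1$, so the $[0,\tfrac12]$-formula evaluates to $\psi^1\circ(\psi^1)^{-1}\circ u(x_0+i)=ev_{x_0+i}(u)$ for every $s$, matching the value $T(u,\tfrac12)=ev_{x_0+i}(u)$ of the second half; joint continuity in $(u,t,s)$ is clear since $\psi^\bullet$, $\gamma$, $u$ and the exponent are continuous and $(\psi^\bullet)^{-1}$ depends continuously on its parameter. At $t=0$ we have $\gamma(0)=x_0$ and $r_0=\operatorname{Im}(\lambda-C)=0$, so $\mathcal H((u,0),s)=u(x_0)$ independently of $s$; at $t=1$, $\mathcal H((u,1),s)=S\circ F(u,1)$ is constant in $s$ because the second half is. Thus $\mathcal H$ is a homotopy rel $\cP_C\times\{0,1\}$ from $S\circ F$ to $T$, as required.

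I do not expect a substantial obstacle. The only two points needing care are recognising that $S\circ F$ and $T$ coincide identically on $\cP_C\times[\tfrac12,1]$ --- which is forced once one unwinds the definition of $\phi^t$ on that interval --- and arranging the interpolation on $[0,\tfrac12]$ so that the $\psi$-time stays in $[0,1]$ and glues continuously to the constant homotopy at $t=\tfrac12$; this is precisely what the identities $r_0=0$ and $r_1=1$ guarantee.
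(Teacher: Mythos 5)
Your proof is correct and is essentially the paper's argument: the paper introduces an auxiliary map $S'(x,t)=\psi^{r_{2t}}(x)$ (geodesic on $[\tfrac12,1]$) so that $S'\circ F=T$ on the nose, and then homotopes $S'$ to $S$ rel endpoints by deforming the $\psi$-time from $r_{2t}$ to $2t$; composing that deformation with $F$ yields exactly your $\mathcal H$. The only cosmetic difference is that you write the induced homotopy directly on $\cP_C\times[0,1]$ rather than factoring it through $L\times[0,1]$.
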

    \begin{proof}
        We define a map $S': L \times [0,1] \to M$ to send $(x, t)$ to $\psi^{r_{2t}}(x)$ if $0 \leq t \leq \frac12$ and to follow the unique small geodesic between $\psi^1(x)$ and $\phi(x)$ for $\frac 12 \leq t \leq 1$. By construction, the following diagram commutes:
        \begin{equation*}
            \begin{tikzcd}
                \cP_C \times [0,1] \arrow[d, "F"] \arrow[dr, "T"] &
                \\
                L \times [0,1] \arrow[r, "S'"] &
                M
            \end{tikzcd}
        \end{equation*}
        $S'$ and $S$ are homotopic (relative to $L \times \{0,1\}$) since they are the same up to a reparametrisation of $[0,\frac12]$.
    \end{proof}
    \begin{LL}\label{LL:7}
        The sweep-out map $\sigma$ vanishes.
    \end{LL}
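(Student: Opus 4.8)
The plan is to transport the computation of $\sigma$ into the moduli space $\cP_C$, where the pinching estimate of Proposition \ref{PP: main} forces the relevant map of pairs to be nullhomotopic. By construction $\sigma$ is the composite of $S^*\colon R^{*+1}(M,L)\to R^{*+1}(L\times[0,1],L\times\{0,1\})$ with the suspension isomorphism, so, the latter being an isomorphism, it suffices to show that $S^*$ vanishes. Consider $F^*\colon R^{*+1}(L\times[0,1],L\times\{0,1\})\to R^{*+1}(\cP_C\times[0,1],\cP_C\times\{0,1\})$: the map $F$ commutes with the projections to $[0,1]$, so it restricts on the two ends to maps $\cP_C\to L$, namely $ev_{x_0}$ at $\cP_C\times\{0\}$ and $(\psi^1)^{-1}\circ ev_{x_0+i}$ at $\cP_C\times\{1\}$, and $F$ itself is a homotopy between these (compatibly with Lemma \ref{LL: 3}). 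By naturality of the suspension isomorphism, $F^*$ is therefore conjugate, via the suspension isomorphisms for $L$ and $\cP_C$, to $ev_{x_0}^*\colon R^*(L)\to R^*(\cP_C)$, which is injective by Proposition \ref{PP: cited}. Meanwhile $T$ and $S\circ F$ agree on $\cP_C\times\{0,1\}$ (both send $(u,0)\mapsto u(x_0)$ and $(u,1)\mapsto\phi\circ(\psi^1)^{-1}\circ ev_{x_0+i}(u)$), so Lemma \ref{LL:6.5} makes them homotopic as maps of pairs, giving $T^*=F^*\circ S^*$. Since $F^*$ is injective, it remains only to prove that $T\colon(\cP_C\times[0,1],\cP_C\times\{0,1\})\to(M,L)$ induces the zero map on $R^*$.

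To prove this, I would first homotope the path $\gamma$, rel its endpoints $x_0$ and $x_0+i$, to the vertical segment $s\mapsto x_0+is$; this is possible because $Z_C$ is convex and $x_0=\lambda-C\in[-C,C]$ for $C$ large. Composing with the evaluation map $ev\colon\cP_C\times Z_C\to M$ deforms $T$, through maps of pairs, to the map $T'$ with $T'(u,t)=u(x_0+2ti)$ for $t\le\tfrac12$ and equal to the same short geodesic (from $u(x_0+i)$ to $\phi\circ(\psi^1)^{-1}\circ ev_{x_0+i}(u)$) for $t\ge\tfrac12$. For $t\le\tfrac12$, Proposition \ref{PP: main} gives $d(u(x_0+2ti),u(x_0))\le\eps$; for $t\ge\tfrac12$, the geodesic has length $\le\eps$ (since $\psi^1$ is $C^0$-close to $\phi$) and starts at $u(x_0+i)$, which is $\eps$-close to $u(x_0)$. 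Hence, with the scales of the various small constants chosen appropriately, $T'(u,t)$ lies in the tubular neighbourhood $U$ and within the injectivity radius of $ev_{x_0}(u)\in L$ for every $(u,t)$. Since $\pi\colon U\to L$ fixes $L$ and $T'$ already maps $\cP_C\times\{0,1\}$ into $L$, the maps $T'$ and $\pi\circ T'$ agree on $\cP_C\times\{0,1\}$, so $T'$ is homotopic, rel $\cP_C\times\{0,1\}$ and via the unique short geodesics, to $\pi\circ T'$; and $\pi\circ T'$ is a map of pairs with image contained in $L$, hence factors through $(L,L)$ and induces the zero map on $R^*$. Therefore $T^*=0$ and $\sigma=0$.

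The only genuinely geometric ingredient is the estimate that $T'$ remains $\eps$-close to $L$, and that is exactly where Proposition \ref{PP: main} is used; everything else is formal manipulation with the suspension isomorphism and with small-geodesic homotopies. The two points that require (routine) care are the bookkeeping of the several scales of small constants, so that the injectivity-radius and tubular-neighbourhood arguments apply simultaneously, and — as in \cite[Section 2.1]{HP} — the choice of model for $R^*(\cP_C)$, which does not affect anything here since every map and homotopy above is an honest map of spaces and $R^*(L,L)=0$ in any reasonable model.
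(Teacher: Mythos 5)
Your proposal is correct and follows essentially the same route as the paper's proof: reduce via the suspension isomorphism and Proposition \ref{PP: cited} to the injectivity of $F^*$, use Lemma \ref{LL:6.5} to replace $S^*$ by $T^*$, homotope $\gamma$ to the vertical segment so that Proposition \ref{PP: main} forces the image into $U$, and retract into $L$ by $\pi$ and short geodesics. You merely spell out details (the identification of $F^*$ with $ev_{x_0}^*$, the final retraction step, and the bookkeeping of the constants $\eps$ versus $U$) that the paper leaves implicit.
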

    \begin{proof}
        Proposition \ref{PP: cited} implies that the map
        \begin{equation*}
            R^*(L \times [0,1], L \times \{0,1\}) \to R^*(\cP_C \times [0,1], \cP_C \times \{0,1\})
        \end{equation*}
        is injective, so it suffices to show that $T$ is homotopic (through maps sending $\cP_C \times \{0,1\}$ to $L$) to a map landing inside the tubular neighbourhood $U$ of $L$. Such a homotopy exists by homotoping $\gamma$ (relative to its endpoints) through $Z_C$ to the straight-line path from $x_0$ to $x_0+i$, and using a map defined with the same formula (with respect to this family of paths instead of $\gamma$) as $T$. Note that here we apply Proposition \ref{PP: main}.
    \end{proof}
    \begin{proof}[Proof of Theorem \ref{TT: rel}]
        Follows from Lemmas \ref{LL:6} and \ref{LL:7}.
    \end{proof}
\bibliography{Refs.bib}{} \bibliographystyle{abbrv}
\Addresses
\end{document}